\theoremstyle{plain}
\newtheorem{Theo}{Theorem}[section]
\newtheorem{lem}[Theo]{Lemma}
\newtheorem{prop}[Theo]{Proposition}
\theoremstyle{plain}
\theoremstyle{definition}
\theoremstyle{remark}
\newtheorem{Rema}[Theo]{Remark}
\newtheorem*{rema*}{Remark}
\newcommand{\NN}{\mathbb{N}}
\newcommand{\eps}{\varepsilon}
\newcommand{\RR}{\mathbb{R}}
\author[T. Hmidi]{Taoufik Hmidi}
\address{IRMAR, Universit\'e de Rennes 1\\ Campus de
Beaulieu\\ 35~042 Rennes cedex\\ France}
\email{thmidi@univ-rennes1.fr}
\author[F. Rousset]{Fr\'ed\'eric Rousset}
\address{IRMAR, Universit\'e de Rennes 1\\ Campus de
Beaulieu\\ 35~042 Rennes cedex\\ France}
\email{frederic.rousset@univ-rennes1.fr}
\date{}
\begin{document}

\title[Axisymmetric Boussinesq system]
{Global well-posedness for the Navier-Stokes-Boussinesq system with axisymmetric data}
\maketitle
\begin{abstract}
In this paper we prove  the global well-posedness   for a  three-dimensional  Boussinesq system   with axisymmetric initial data. This system couples  the Navier-Stokes equation with a transport-diffusion  equation governing the temperature. Our result holds uniformly with respect to the heat conductivity 
 coefficient $\kappa \geq 0$ which may vanish. 
\end{abstract}
\section{Introduction}
 The   Boussinesq system  is  widely used to model the dynamics of the  ocean or  the atmosphere.
    It  arises from the  
    density dependent  incompressible  Navier-Stokes equations by using the so-called
    Boussinesq approximation which consists in neglecting the density dependence in all the terms
     but  the one involving the gravity. 
      This system writes  
\begin{equation}
    \label{bsintro}
\left\{ \begin{array}{ll}
\partial_{t}v+v\cdot\nabla v  -  \Delta v+\nabla p=\rho e_z,\quad (t,x)\in \RR_+\times\RR^3,\\
\partial_{t}\rho+v\cdot\nabla\rho= \kappa \Delta \rho,\\
\textnormal{div}\,  v=0,\\
v_{| t=0}=v_{0},   \quad \rho_{| t=0}=\rho_{0}.
\end{array} \right.
\end{equation}
Here, the velocity  $v=(v^1,v^2,v^3)$ is a three-component vector field with zero divergence,  the scalar
function $\rho$ denotes the density or the temperature and $p$ the pressure of the fluid. 
 The  coefficient  $\kappa \geq 0$  is a  Reynolds number which takes into account
   the  strength of heat conductivity. Note that we have assumed that the viscosity coefficient
     is one, one can always reduce the problem to this situation by a change of scale
      (as soon as the fluid is assumed to be viscous)  
      which is not important for global well-posedness issues with data of arbitrary size
       that we shall consider.
  The  term  $\rho e_{z}$  where $e_{z}= (0, 0, 1)^t$ takes into account the influence of
   the gravity  and the stratification  on the motion of the fluid. 
Note 
 that  when the initial density $\rho_0$ is identically zero (or constant)  then the above system  reduces  to the
classical  incompressible Navier-Stokes equation:
\begin{equation}
      \label{ns}
\left\{ \begin{array}{ll}
\partial_{t}v+v\cdot\nabla v-\Delta v+\nabla p=0\\
\textnormal{div}\,  v=0\\
v_{| t=0}=v_{0}.
\end{array} \right.
\end{equation}
 From this observation, one cannot expect to have a better theory for the Boussinesq system
  than for the Navier-Stokes equations.
 The existence of global weak solutions in the energy space for \eqref{ns}
 goes back to J. Leray \cite{Leray}. However the uniqueness of these solutions
 is only known in space dimension two. It is also well-known that  smooth solutions are global
 in dimension two and  for higher dimensions when the data are small in some critical spaces; see for instance \cite{Lemar} for more detailed  discussions.
  In a similar way, the global well-posedness  for two-dimensional Boussinesq systems
   which has recently drawn a lot of attention seems   to be in a satisfactory state. More precisely 
 global  well-posedness   has been shown  in  various  function spaces  and for
different viscosities, we refer  for example to \cite{ah,Brenier,
ha,dp1,dp,dp2,hk1,hk,HKR1,HKR2}.
  For  three-dimensional systems  few results are known about global existence. We  can quote 
the result of R. Danchin and M. Paicu \cite{dp} who proved a global well-posedness result for
small initial data belonging  to some critical Lorentz spaces.

 Let us recall 
 that 
  it is still not known  if smooth solutions with large initial data for the Navier-Stokes equations
   can blow-up in finite time  in dimension $3$.  Only  some
  partial results are known.   For example,  in a recent series of papers
  \cite{CG,CG1,CG2}
     global existence in dimension
 three is established   for   initial data which are not small in any critical space but which have some
  special structure (oscillations or  slow variations in one direction).
   Another     interesting   case  of    global existence for \eqref{ns}  corresponding  to large initial data but with special structure is  the more classical case of  axisymmetric solutions without swirl.
   Our aim in this paper is to establish the corresponding global well-posedness result for the
    three-dimensional Boussinesq system.
   
Before  stating our main result,   let us   describe the classical result for the  Navier-Stokes  equation.
  It is well-known that the  control of the vorticity  $\omega$ which  is  the vector defined by
 $\omega={\rm curl}\,  v$ and  solving the vorticity  equation
$$
\partial_t \omega+v\cdot\nabla\omega-\Delta\omega=\omega \cdot\nabla v
$$
is crucial in order to get global well-posedness results.
According to the classical  Beale-Kato-Majda  criterion \cite{bkm} 
 the control of the vorticity in $L^1_{loc}(\mathbb{R}_{+}, L^\infty)$ is sufficient  to get   the  global existence
  of smooth solutions.
  The main difficulty arising in dimension  three is the lack
of information about the influence of  the vortex-stretching term
$\omega \cdot\nabla v$  on  the  motion of the fluid.
Let us now consider   a   vector field $v$  which is  axisymmetric
 without swirl,   this means that   it has
the form:
\begin{equation}
\label{vform}
v(t,x) = v^r(t,r, z)e_r + v^z (t,r, z)e_z,\quad x=(x_{1},x_{2},z),\quad r=({x_{1}^2+x_{2}^2})^{\frac12},
\end{equation}
where  $\big(e_r, e_{\theta} , e_z\big)$ is the local  orthonormal basis of $ \mathbb{R}^3$  corresponding to cylindrical 
 coordinates. Note that we assume that  the velocity is  invariant by rotation around the  vertical axis
(axisymmetric flow) and that the component $v^{\theta}$ of $v$ about $e_{\theta}$ identically vanishes
 (without swirl).   For these   flows, 
 we have:  
$$
\omega=(\partial_zv^r-\partial_rv^z) e_{\theta}:=\omega_\theta e_\theta, \quad
  \omega \cdot \nabla v= {v^r \over r }\omega.$$
 In particular   $\omega_\theta$ satisfies  the equation
\begin{equation}
 \label{tourbillon}
\partial_t \omega_\theta +v\cdot\nabla\omega_\theta-\Delta\omega_\theta
+\frac{\omega_\theta}{r^2} =\frac{v^r}{r}\omega_\theta.
\end{equation}
 The crucial fact is then that  the quantity $\zeta:=\frac{\omega_\theta}{r}$  evolves according  to the equation
\begin{equation*}
\partial_t \zeta+v\cdot\nabla \zeta- \big( \Delta +\frac2r\partial_r\big) \zeta=0
\end{equation*}
from which we  get 
 that for all $p\in[1,\infty]$
$$
\|\zeta(t)\|_{L^p}\le\| \zeta_0\|_{L^p}.
$$
It was shown by M. Ukhoviskii and V. Yudovich \cite{Ukhovskii}  and independently by O. A. Ladyzhenskaya \cite{LA}  that these  new a priori estimates  are  strong enough to prevent the formation of singularities in
finite time for axisymmetric flows without swirl:  the system \eqref{ns} has a unique global solution for  $v_0\in H^1$ such that
$\omega_0,\frac{\omega_0}{r}\in L^2\cap L^\infty.$  We point out that  the result is uniform with respect to vanishing viscosity and thus  there is no blowup even for the Euler equation.
Note  that in term of Sobolev
regularities these assumptions are satisfied when $v_0\in H^s$ with $s>\frac72.$
 This regularity assumption is not optimal and  has been weakened in   
      \cite{lmnp} for   $v_0\in H^2$ and   more recently  in \cite{a} for  $v_0\in H^{\frac{1}{2}}$.
      
 Our aim here is to  extend  these classical results  for the Navier-Stokes equation
   to  Boussinesq systems. The equation for $\zeta= \omega_{\theta}/r$
   becomes     
\begin{equation}
\label{mBouss}\partial_t \zeta+v\cdot\nabla \zeta-  \big( \Delta\Gamma+\frac2r\partial_r\big) \zeta= - {\partial_{r} \rho \over r }
\end{equation}
and thus the difficulty is to use some a priori estimates on $\rho$ to control the   term in the right-hand side
 of  \eqref{mBouss}. The rough idea is that  on the axis $r=0$ the singularity ${1\over r}$
 scales  as a derivative and hence that  the forcing term $\partial_{r} \rho/r$ can be thought as a Laplacian
  of $\rho$ and thus one may try to use  smoothing effects to control it.   
  Nevetheless,   when $\kappa=0$,
    since there is no 
     smoothing effects on $\rho$, one can hope to 
      compensate the loss of derivatives in the right hand-side of \eqref{mBouss} only by 
       using the full smoothing effects of the heat type equation in the  left hand side.
       Note that this kind of estimates does not follow from energy estimates and hence
        the problem that one has to face is that the convection  term  that has to be handled in the process
         is not negligible: this approach naturally leads to some  restriction on the size of the data.
         
         In \cite{A-H-K0}, a global existence result for the system  \eqref{bsintro}, with $\kappa=0,$ was established but under some restrictive conditions on the support of the initial density
          namely that it does not intersect the  axis $r=0$. More precisely, 
     \begin{Theo}
 Let $v_0\in H^1$ be an axisymmetric divergence free  vector field without swirl  and such that $\frac{{\omega_0}}{r}\in L^2.$
 Let $\rho_0\in L^2\cap L^\infty$  axisymmetric and such that $\hbox{supp }\rho_0$
 does not intersect the axis $(Oz)$ and $\Pi_z(\hbox{supp }\rho_0)$ is a compact set.
 Then the system \eqref{bsintro}, with $\kappa=0,$ has a unique global solution $(v,\rho)$ such that
 $$
 v\in{C}(\RR_+;H^1)\cap L^1_{\textnormal{loc }}(\RR_+;W^{1,\infty}),$$
 $$ \frac{\omega}{r}\in L^\infty_{\textnormal{loc}}(\RR_+;L^2),\quad
 \rho\in L^\infty_{\textnormal{loc}}(\RR_+;L^2\cap L^\infty).
 $$
 Here $\Pi_z$ denotes the orthogonal projector over $(Oz).$
  \end{Theo}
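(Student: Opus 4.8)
The plan is a continuation argument. First one constructs, by a standard mollification/compactness scheme that preserves the axisymmetry without swirl, a local-in-time solution on a maximal interval $[0,T^{*})$ in the stated class, together with the blow-up criterion that the solution extends beyond $T^{*}$ as soon as $\int_{0}^{T^{*}}\|\nabla v(t)\|_{L^{\infty}}\,dt<\infty$; this quantity controls the Lipschitz norm of the flow of $v$, hence the transport of $\rho$ and of the vorticity, as well as uniqueness. Everything therefore reduces to showing that $\int_{0}^{T}\|\nabla v\|_{L^{\infty}}\,dt<\infty$ for every $T>0$, which we establish by propagating a priori bounds on a smooth approximation.

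The first ingredient concerns $\rho$. Since $\kappa=0$ the density is purely transported, so $\|\rho(t)\|_{L^{p}}=\|\rho_{0}\|_{L^{p}}$ for all $p\in[1,\infty]$ and $\operatorname{supp}\rho(t)=\Psi_{t}(\operatorname{supp}\rho_{0})$, $\Psi_{t}$ being the flow of $v$. The point where the support hypothesis enters is a quantitative lower bound on $d(t):=\operatorname{dist}\!\big(\operatorname{supp}\rho(t),(Oz)\big)$: along a particle trajectory $\frac{d}{dt}r=v^{r}(t,r,z)$, and since the flow is axisymmetric without swirl $v^{r}$ vanishes on the axis and is Lipschitz, so $|v^{r}(t,r,z)|\le r\,\|\nabla v(t)\|_{L^{\infty}}$, whence
\[
d(t)\ \ge\ d_{0}\,\exp\!\Big(-\!\int_{0}^{t}\|\nabla v(s)\|_{L^{\infty}}\,ds\Big),\qquad d_{0}:=\operatorname{dist}(\operatorname{supp}\rho_{0},(Oz))>0 .
\]
Likewise the vertical extent of $\operatorname{supp}\rho(t)$ stays bounded on each $[0,T]$, so $\rho(t)$ remains supported in a fixed compact annular region on bounded time intervals. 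Together with the energy estimate $\|v(t)\|_{L^{2}}^{2}+2\int_{0}^{t}\|\nabla v\|_{L^{2}}^{2}\le(\|v_{0}\|_{L^{2}}+t\|\rho_{0}\|_{L^{2}})^{2}$, these are the ``free'' bounds.

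The heart of the matter is the estimate for $\zeta=\omega_{\theta}/r$, which solves \eqref{mBouss}. An $L^{2}$ energy estimate on \eqref{mBouss}, using that the operator $\Delta+\frac{2}{r}\partial_{r}$ is dissipative with $-\int(\Delta+\frac{2}{r}\partial_{r})\zeta\,\zeta\,dx\gtrsim\|\nabla\zeta\|_{L^{2}}^{2}$, leaves only the forcing term $-\int\frac{\partial_{r}\rho}{r}\zeta\,dx$; writing it in cylindrical coordinates and integrating by parts in $r$ (the boundary term at $r=0$ vanishes because $\operatorname{supp}\rho$ avoids the axis) turns it into $\int\frac{\rho}{r}\partial_{r}\zeta\,dx$, bounded by $\|\rho/r\|_{L^{2}}\|\nabla\zeta\|_{L^{2}}$ and hence absorbed. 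Since $\|\rho(t)/r\|_{L^{2}}\le\|\rho_{0}\|_{L^{2}}/d(t)$ this gives
\[
\|\zeta(t)\|_{L^{2}}^{2}+c\!\int_{0}^{t}\|\nabla\zeta\|_{L^{2}}^{2}\ \le\ \|\zeta_{0}\|_{L^{2}}^{2}+C\,\|\rho_{0}\|_{L^{2}}^{2}\!\int_{0}^{t}\frac{ds}{d(s)^{2}} .
\]
A parallel energy estimate for $\omega_{\theta}$ itself (again transferring the $\partial_{r}\rho$ forcing onto $\nabla\omega_{\theta}$ by integration by parts, and controlling the stretching term $\frac{v^{r}}{r}\omega_{\theta}$ by $\|v^{r}/r\|_{L^{\infty}}$) yields $\omega\in L^{\infty}_{\mathrm{loc}}(\RR_{+};L^{2})$, and the parabolic smoothing contained in $\zeta\in L^{2}_{\mathrm{loc}}(\RR_{+};\dot H^{1})$, upgraded to higher integrability, suffices — through the Biot--Savart law adapted to the axisymmetric geometry — to bound both $\|v^{r}/r\|_{L^{\infty}}$ and $\|\nabla v\|_{L^{\infty}}$ by powers of the quantities already controlled.

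The main obstacle is closing the loop. The estimates above are coupled: $\int_{0}^{t}\|\nabla v\|_{L^{\infty}}$ is governed by $\zeta$-norms, which degrade through $\int_{0}^{t}d(s)^{-2}\,ds$, which is in turn controlled by $d_{0}^{-2}\exp\!\big(2\int_{0}^{t}\|\nabla v\|_{L^{\infty}}\big)$. One has to organize the nonlinear Gronwall argument so that all these quantities remain finite on every bounded time interval (the resulting bounds may grow extremely fast in $T$, but finiteness is all that is required). Feeding the outcome back into the blow-up criterion gives $\int_{0}^{T}\|\nabla v\|_{L^{\infty}}\,dt<\infty$ for all $T$, hence global existence. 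Uniqueness follows from a standard energy estimate on the difference of two solutions — in $L^{2}$ for the velocities and in a negative-index space for the densities, since $\rho$ is only $L^{2}\cap L^{\infty}$ — closed by Gronwall using $v\in L^{1}_{\mathrm{loc}}(\RR_{+};W^{1,\infty})$.
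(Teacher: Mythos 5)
First, a remark on what you are being compared against: this theorem is not proved in the present paper. It is quoted from the earlier work \cite{A-H-K0}, and the authors only recall its strategy in one sentence — obtain a lower bound on the distance from $\operatorname{supp}\rho(t)$ to the axis, use it to control $\|\rho/r\|_{L^2}$, and feed that into the energy estimate for $\omega/r$ in $L^\infty_tL^2\cap L^2_t\dot H^1$. Your sketch reproduces exactly this strategy, and the individual ingredients you list (conservation of $\|\rho\|_{L^p}$, transport of the support, the dissipativity of $\Delta+\frac2r\partial_r$, the integration by parts moving $\partial_r$ off of $\rho$, the bound $\|\rho(t)/r\|_{L^2}\le\|\rho_0\|_{L^2}/d(t)$, the Biot--Savart estimate for $v^r/r$) are all correct.

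The genuine gap is the step you yourself flag as ``the main obstacle'' and then dismiss with ``one has to organize the nonlinear Gronwall argument.'' As you have set it up, the loop does not close, for any organization. Indeed, writing $y(t)=\int_0^t\|\nabla v\|_{L^\infty}\,ds$ (or, better and cheaper, $y(t)=\int_0^t\|v^r/r\|_{L^\infty}\,ds$, since $\frac{d}{dt}\log r=v^r/r$ along trajectories and $\|v^r/r\|_{L^\infty}\lesssim\|\zeta\|_{L^2}^{1/2}\|\nabla\zeta\|_{L^2}^{1/2}$ does not require the full Lipschitz theory), your chain of estimates reads: $d(t)^{-2}\le d_0^{-2}e^{2y(t)}$, hence
$M(t):=\|\zeta\|^2_{L^\infty_tL^2}+\|\nabla\zeta\|^2_{L^2_tL^2}\le M_0+Cd_0^{-2}\int_0^te^{2y(s)}ds$, while $y(t)\le Ct^{3/4}M(t)^{1/2}$ (or worse if you insist on $\|\nabla v\|_{L^\infty}$, which depends on $M$ through a further Gronwall for $\omega_\theta$). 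Eliminating $M$ gives an inequality of the type $u(t)\le A(t)+B(t)\int_0^t e^{c\sqrt{u(s)}}\,ds$ with $u=y^2$; the comparison ODE $u'=Be^{c\sqrt u}$ blows up in finite time, so no continuity or bootstrap argument extracts a bound valid on every $[0,T]$. In other words, the exponential degradation of $d(t)$ feeds back \emph{exponentially} into the $\zeta$ energy estimate, while the control of $y$ by the $\zeta$ norms is only polynomial — this is supercritical, not a Gronwall situation. Breaking this loop is precisely the technical content of \cite{A-H-K0}, and it is the part your proposal does not contain; it is also the reason the present paper abandons the support-tracking approach altogether in favour of the diagonalization by the operator $\mathcal{L}$, which yields an estimate for $\Gamma=\zeta-\mathcal{L}\rho$ with no $1/d(t)$ loss and therefore extends to data supported up to the axis and to $\kappa>0$.
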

    Since  to bound the quantity $\|\omega/r\|_{L^\infty_tL^2\cap L^2_t\dot{H}^1}$ one needs   to estimate $\|\rho/r(t)\|_{L^2}.$ 
     The idea of the proof was to get an estimate from below for the distance of the support of $\rho$
      to  the  vertical axis (note that this distance remains  positive as long as the solution
        remains smooth because of  the assumption
       on the initial density).
    Note that   for this approach, it is crucial to have a transport equation   for  the density, it fails for  $\kappa>0$ because  $\rho$ cannot  be  supported away from the axis even if the initial data is.

     In this paper,    by using a different approach, which uses more deeply the structure
      of the coupling between the two equations of \eqref{bsintro},   we  remove the  assumption on the support of the density and we give a global well-posedness result with uniform bounds   with respect to the   heat conductivity $\kappa.$   Our main result reads as follows.

 \begin{Theo}\label{thm1}
 Consider the Boussinesq system \eqref{bsintro}  for  $\kappa\geq 0$.
 Let $v_0\in H^1$ be an axisymmetric divergence free  vector field without swirl  
  such that $\frac{{\omega_0}}{r}\in L^2$
 and let $\rho_0\in L^{2}\cap B_{3,1}^0$  an axisymmetric function. 
 Then there is  a unique global  solution $(v,\rho)$ such that
 $$
 v\in\mathcal{C}(\RR_+;H^1)\cap L^2_{\textnormal{loc}}(\RR_+; H^2)\cap L^1_{\textnormal{loc }}(\RR_+;B_{\infty,1}^{1}),$$
 $$ \frac{\omega}{r}\in L^\infty_{\textnormal{loc}}(\RR_+;L^2),\quad
 \rho\in \mathcal{C}(\RR_+;L^{2}\cap B_{3,1}^0).
 $$
 Moreover the  estimates in the above spaces   are  uniform  for  $\kappa \geq 0 $ in any 
  bounded set of $\mathbb{R}_{+}$.
  \end{Theo}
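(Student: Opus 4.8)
The plan is to prove Theorem~\ref{thm1} via a priori estimates combined with a standard approximation/compactness scheme, the crux being uniform-in-$\kappa$ bounds. First I would set up the scaffolding: the basic energy estimate gives $\|v\|_{L^\infty_tL^2}^2+\|\nabla v\|_{L^2_tL^2}^2$ controlled via $\|\rho\|_{L^\infty_tL^2}$, and the transport-diffusion equation for $\rho$ yields $\|\rho(t)\|_{L^p}\le\|\rho_0\|_{L^p}$ for all $p\in[1,\infty]$ uniformly in $\kappa\ge0$ (maximum principle, no smoothing needed). In particular $\|\rho\|_{L^\infty_tL^2}\le\|\rho_0\|_{L^2}$, which closes the energy estimate on $v$ on any finite time interval. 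The real work is to propagate the scaling-critical quantity $\zeta=\omega_\theta/r$ in $L^\infty_tL^2\cap L^2_t\dot H^1$; by \eqref{mBouss} this requires controlling $\|\partial_r\rho/r\|_{L^2_tL^2}$, i.e.\ roughly one full derivative of $\rho$ in $L^2_tL^2$.

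The key idea is to exploit the coupling rather than estimate $\rho$ alone. I would look for a good unknown: testing \eqref{mBouss} against $\zeta$ produces the problematic term $\int (\partial_r\rho/r)\zeta$, which after integrating by parts (using $\mathrm{div}\,v=0$ and the structure of the $e_\theta$ component) I expect to rewrite in a form that can be absorbed. Concretely one should differentiate the $\rho$-equation and couple it to the $\zeta$-equation, or introduce a combined functional of the type $\|\zeta\|_{L^2}^2 + \|\text{(some derivative of }\rho)\|_{L^2}^2$ plus perhaps a cross term, whose time derivative has the bad contributions cancel. The axisymmetry is essential here: on the axis $1/r$ behaves like a derivative, so $\partial_r\rho/r$ genuinely costs one derivative of $\rho$, and only the parabolic smoothing in the left side of \eqref{mBouss} (the $\Delta+\frac2r\partial_r$ operator, which is the radial Laplacian in the $\zeta$ variable) can pay for it — but the transport term $v\cdot\nabla\zeta$ is not controlled by energy estimates alone, so one needs the Gronwall argument to be set up so that the convection term enters with the $L^2_t\dot H^1$ norm of $\zeta$ already being produced, times the $L^\infty_t$ norms that are under control, closing the loop on finite time intervals.

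Once $\zeta\in L^\infty_tL^2\cap L^2_t\dot H^1$ is in hand, I would upgrade to the regularity claimed in the theorem. The bound on $\zeta$ together with the Biot--Savart law for axisymmetric flows gives $\nabla v\in L^\infty_tL^2$ and, via standard elliptic/maximal-regularity estimates for the Stokes or vorticity system, $v\in L^2_t H^2$. For the $L^1_tB^1_{\infty,1}$ bound on $v$ and the persistence of $\rho\in\mathcal C(\RR_+;L^2\cap B^0_{3,1})$ I would use Littlewood--Paley / logarithmic-type estimates: the transport-diffusion estimate for $\rho$ in the Besov space $B^0_{3,1}$ controlled by $\int_0^t\|\nabla v\|_{L^\infty}\,ds$, and then a log-interpolation inequality bounding $\|v\|_{B^1_{\infty,1}}$ (equivalently $\|\omega\|_{B^0_{\infty,1}}$) by lower-order norms times $\log(e+\text{higher Sobolev norm})$, closed by a Gronwall argument — this is the BKM-type mechanism, for which having $\zeta\in L^2_t\dot H^1$ and $\rho\in L^\infty_tL^\infty$ should suffice. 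The main obstacle I anticipate is genuinely Step~2: finding the right combined energy functional so that the loss of one derivative in $\partial_r\rho/r$ is compensated exactly by the parabolic gain in $\zeta$, while keeping every constant independent of $\kappa\ge0$ (so that no term is allowed to rely on $\kappa>0$ smoothing of $\rho$). Uniqueness should follow from an $L^2$-type stability estimate on the difference of two solutions, using the $L^1_tW^{1,\infty}$ control on $v$ and the Besov regularity of $\rho$; this is routine once the a priori bounds are established.
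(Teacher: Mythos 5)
Your scaffolding (energy estimates, maximum principle for $\rho$ uniform in $\kappa$, the reduction to controlling $\zeta=\omega_\theta/r$ in $L^\infty_tL^2\cap L^2_t\dot H^1$, and the final Lipschitz/Besov bootstrap via the logarithmic estimate for $\rho$ in $B^0_{3,1}$ and Gronwall) matches the paper. But the central step is missing, and you say so yourself: you do not construct the good unknown. The candidates you propose would not work. A functional of the form $\|\zeta\|_{L^2}^2+\|(\text{some derivative of }\rho)\|_{L^2}^2$ is circular when $\kappa=0$: propagating any derivative of $\rho$ in $L^2$ along the pure transport equation requires $\nabla v\in L^1_tL^\infty$, which is exactly what you are trying to prove. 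And integrating the cross term $\int(\partial_r\rho/r)\,\zeta\,dx$ by parts only trades it for $\int \rho\,(\partial_r\zeta/r)\,dx$ plus boundary terms on the axis; the weighted quantity $\partial_r\zeta/r$ is \emph{not} controlled by the dissipation $\|\nabla\zeta\|_{L^2}^2$ produced by $\Delta+\frac2r\partial_r$, so the loop does not close by local manipulations.

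The paper's resolution is genuinely nonlocal: one introduces $\mathcal{L}=\big(\Delta+\frac2r\partial_r\big)^{-1}\frac{\partial_r}{r}$ and the unknown $\Gamma=(1-\kappa)\zeta-\mathcal{L}\rho$, which diagonalizes the linear coupling and satisfies a convection--diffusion equation whose source is the commutator $[\mathcal{L},v\cdot\nabla]\rho$. This rests on three ingredients that your proposal does not supply: (i) the $L^p$ boundedness of $\mathcal{L}$, $\|\mathcal{L}\rho\|_{L^p}\le C_p\|\rho\|_{L^p}$ for $2\le p<\infty$ --- the singular gain of $1/r$ exactly cancels the loss from $\partial_r$, proved via a regularized elliptic problem and a Caffarelli--Kohn--Nirenberg weighted inequality (this is Section 3, which the authors call the heart of the paper); (ii) the commutation identity $\mathcal{L}\Delta=(\Delta+\frac2r\partial_r)\mathcal{L}$, which is what makes the same unknown work uniformly for $\kappa\ne1$; (iii) structural identities for $\mathcal{L}\partial_r$ allowing the commutator to be estimated using only $\|\rho\|_{L^3}$, $\|\nabla v\|_{L^2}$ and $\|v^r/r\|_{L^6}\lesssim\|\zeta\|_{L^2}$. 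There is also a separate, easier argument near $\kappa=1$ (using $\Gamma_1=\zeta-\rho/2$), since $\Gamma$ degenerates there; your proposal cannot see this issue because it never reaches the construction. In short, the strategy is correctly diagnosed but the key idea --- that the right ``derivative of $\rho$'' is the zeroth-order nonlocal object $\mathcal{L}\rho$, bounded by $\|\rho_0\|_{L^p}$ alone --- is absent, and without it the a priori estimate on $\zeta$ does not close.
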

The definition of the Besov spaces $B_{p,q}^s$ is recalled below.

Let us give a few comments about our  result.

  \begin{Rema}
We find  at most an exponential growth for  the velocity:   there exists $C_{0}>0$
 depending only on the data such that  for every $\kappa \in [0,1]$, we have 
$$
\|v\|_{L^\infty_tH^1\cap L^2_t H^2}\le C_0 e^{C_0 t}.
$$ 
  \end{Rema}
  \begin{Rema}
  The uniformity of the norms with respect to the conductivity $\kappa$ can be established for every $\kappa\in [0,A]$ for every $A>0.$   Nevertheless,  as we shall see below, we need to use a  different transformation of the equation when
   $\kappa$ is close to one.
     \end{Rema}
  \begin{Rema}
  The result of the theorem remains true if we change  the Besov space  $B_{3,1}^0$ for 
   the Lebesgue space  $L^m$  for  $m>3.$ We have not tried to  get the best result in 
    terms of the regularity of the velocity.   At the price of more technicality, it is probably possible
      as in \cite{a} to get  the same result by assuming only that  $v_{0} \in {H}^{1 \over 2 }$.
      \end{Rema}
\begin{Rema}
By using the control of  $\nabla v$ in $B_{\infty, 1}^0$ which is given by Theorem  \ref{thm1},
 on can easily propagate  by classical arguments higher order regularity for example higher $H^s$ Sobolev regularity.
\end{Rema}

Let us explain  our strategy for the proof.  The crucial part in the proof consists now in finding
 suitable a priori estimates for  $(\zeta,\, \rho)$. Let us describe the idea 
  in the case that $\kappa=0$ (this  is the  one that one needs to understand in priority 
   because of the lack of smoothing effect). As we have already noticed the  coupling 
 between the two equations does not make the original
 Boussinesq system \eqref{bsintro}  well-suited for  a priori estimates.
  Since the right  hand side of  \eqref{mBouss} behaves roughly as  a Laplacian,
   we need to fully  use the left hand side to control it. 
    The main idea is to use an approach   related to the one  used  for  the study of   two-dimensional systems
     with a critical dissipation, see  \cite{HKR1,HKR2}. It consists in diagonalizing
      the linear part of the system  satisfied  by $\zeta$
       and  $\rho$.
      We introduce a   new unknown $\Gamma$ which formally reads 
     $$  \Gamma= \zeta - \big( \Delta + {2 \over r} \partial_{r}\big)^{-1}  {\partial_{r} \over r} \rho: = \zeta- \mathcal{L}
      \rho$$
      and we  study the system satisfied by $(\Gamma, \rho)$ which is given by:
      $$ \partial_{t} \Gamma + v \cdot \nabla \Gamma - \big(  \Delta + {2 \over r } \partial_{r}\big)
       \Gamma =  -\big[\mathcal{L}, v \cdot \nabla \rho\big], \quad  \partial_{t} \rho + v \cdot \nabla \rho =0$$
       where $\big[\mathcal{L}, v \cdot \nabla \big]$ is the commutator defined by 
       $$  [\mathcal{L}, v \cdot \nabla ]f= \mathcal{L}(v\cdot \nabla f) -  v \cdot \nabla \mathcal{L}f.$$
     We can thus  get a priori estimates for $\Gamma$
      and $\rho$ (note that they are  obvious if we negect the commutator)
       and then use them to  deduce estimates for $\zeta$.  The main difficulties that one has to deal with are twofold. The first one is the study of the operator $\mathcal{L}$, to make this argument rigorous, 
         we need to prove that $\mathcal{L}$ is well-defined and is a bounded operator
          on  $L^p$. The second one is the study of the commutator.
      Once estimates on $\zeta$ are obtained,  the situation gets close
       to the standard axisymmetric Navier-Stokes equations.
        When $\kappa$ is non zero,  it turns out that the same kind of transformation can be used  and
         that it depends smoothly on $\kappa$. The new unknown
          $\Gamma_{\kappa}= (1- \kappa) \zeta  - \mathcal{L}\rho$   solves
           the  same convection diffusion  equation as $\Gamma$. This is due to a surprising
            commutation property between $\Delta$ and $\mathcal{L}$
             which is stated in Lemma \ref{lemLD}. Consequently, we are again able to obtain
              and estimate for $\zeta$ from an estimate for $\Gamma_\kappa$ as soon as $\kappa \neq 1$.
          The case that $\kappa$ is close to one (which is easier since there is a non vanishing smoothing
           effect in the density equation) can be handled by using a different transformation.
       
     The paper is organized as follows. 
   In section \ref{sectionprelim} we  fix the notations,  give the definitions of the functional spaces
    that we shall use and  state some useful inequalities. Next, in section \ref{sectionel}, we
     study the operator $\mathcal{L}$, this amounts to study an elliptic equation with singular
      coefficients.  In  section \ref{sectionapriori}, we  obtain a priori estimates for
       sufficiently smooth solutions of \eqref{bsintro}, they are obtained by using the procedure
        that we have just described. Finally, in section \ref{sectionproof}, we give the proof
         of Theorem \ref{thm1}: we obtain the existence part by using the a apriori estimates
          and an approximation argument and then we prove the uniqueness part.

\section{Preliminaries}
\label{sectionprelim}
  Throughout this paper, $C$ stands for some real positive constant which may be different
  in each occurrence and $C_0$ for a positive constant depending on the initial data. 
   Moreover, both are assumed to be independent of $\kappa$ for $\kappa \geq 0$
    in a bounded set.
    We shall
  sometimes alternatively use the notation $X\lesssim Y$ for an inequality of the  type $X\leq CY$.

 For $s\in \RR,$  we  denote by $H^s(\mathbb{R}^3)$ the standard Sobolev spaces:   $u$ belongs to $H^s$ if $u$ is a tempered distribution and 
$$
 \|u\|_{ H^s}^2=\int_{\RR^3}(1+|\xi|^2)^{s}|\widehat u(\xi)|^2d\xi<\infty.
$$
 We shall also use the homogeneous version $\dot{H}^s(\mathbb{R}^3)$: for $s<3/2$, $u \in  \dot{H}^s$
  if $u  \in L^1_{loc}$ and
 $$  \|u\|_{ \dot{H}^s}^2=\int_{\RR^3}|\xi|^{2s}|\widehat u(\xi)|^2d\xi<\infty.$$
Now to introduce Besov spaces which are a generalization of Sobolev spaces we need to recall 
  the dyadic decomposition of the whole space (see  \cite{che1}).
  \begin{prop}
There exist two positive radial  functions  $\chi\in \mathcal{D}(\mathbb R^3)$ and
$\varphi\in\mathcal{D}(\mathbb R^3\backslash{\{0\}})$ such that
\begin{enumerate}
\item
$\displaystyle{\chi(\xi)
+\sum_{q\in\NN}\varphi(2^{-q}\xi)=1},\quad \frac13\le \chi^2(\xi)+\sum_{q\in\NN}\varphi^2(2^{-q}\xi)\le 1$\;
$\forall\xi\in\mathbb R^3,$
\item
$ \textnormal{supp }\varphi(2^{-p}\cdot)\cap
\textnormal{supp }\varphi(2^{-q}\cdot)=\varnothing,$ if  $|p-q|\geq 2$,\\

\item
$\displaystyle{q\geq1\Rightarrow \textnormal{supp}\chi\cap
\textnormal{supp }\varphi(2^{-q})=\varnothing}$.
\end{enumerate}
\end{prop}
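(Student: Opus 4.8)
The plan is to construct $\chi$ and $\varphi$ explicitly, obtaining the partition identity as a telescoping sum. First I would fix a smooth radial function $\chi\in\mathcal{D}(\mathbb{R}^3)$ with values in $[0,1]$, identically equal to $1$ on the ball $\{|\xi|\le 3/4\}$, supported in $\{|\xi|\le 4/3\}$, and --- this is the point one must not overlook --- \emph{radially non-increasing}, i.e. of the form $\chi(\xi)=\theta(|\xi|)$ with $\theta$ smooth and non-increasing. Such a $\chi$ is produced in the standard way, by mollifying the indicator of a ball or by gluing with the classical transition function built from $t\mapsto e^{-1/t}$. I would then set
\begin{equation*}
\varphi(\xi)=\chi(\xi/2)-\chi(\xi).
\end{equation*}
Since $|\xi/2|\le|\xi|$ and $\theta$ is non-increasing we have $\varphi\ge 0$; moreover $\varphi$ is smooth, radial, vanishes near $0$ (both terms equal $1$ there) and is compactly supported, so $\varphi\in\mathcal{D}(\mathbb{R}^3\setminus\{0\})$, with $\mathrm{supp}\,\varphi\subset\{3/4\le|\xi|\le 8/3\}$.

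The first identity of (1) is then a telescoping sum: since $\varphi(2^{-q}\xi)=\chi(2^{-q-1}\xi)-\chi(2^{-q}\xi)$, we get for every $N$
\begin{equation*}
\chi(\xi)+\sum_{q=0}^{N}\varphi(2^{-q}\xi)=\chi(\xi)+\sum_{q=0}^{N}\big(\chi(2^{-q-1}\xi)-\chi(2^{-q}\xi)\big)=\chi(2^{-N-1}\xi),
\end{equation*}
and for each fixed $\xi$ the right-hand side equals $1$ as soon as $2^{-N-1}|\xi|\le 3/4$; letting $N\to\infty$ gives the identity. The support conditions (2) and (3) follow by tracking the dyadic annuli: $\mathrm{supp}\,\varphi(2^{-q}\cdot)\subset\{3\cdot 2^{q-2}\le|\xi|\le 2^{q+3}/3\}$, whose outer radius for an index $p$ lies below the inner radius for an index $q$ whenever $p\le q-2$, since then $2^{p+3}/3\le 2^{q+1}/3<3\cdot 2^{q-2}$; this is (2). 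For $q\ge 1$ the inner radius $3\cdot 2^{q-2}\ge 3/2$ exceeds the radius $4/3$ of $\mathrm{supp}\,\chi$, which is (3).

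It remains to prove the two-sided bound in (1). For the upper bound I would use that $\chi$ and all the $\varphi(2^{-q}\cdot)$ take values in $[0,1]$, so $a^2\le a$ for each term and hence
\begin{equation*}
\chi^2(\xi)+\sum_{q\in\NN}\varphi^2(2^{-q}\xi)\le\chi(\xi)+\sum_{q\in\NN}\varphi(2^{-q}\xi)=1.
\end{equation*}
For the lower bound the key observation, read off from (2) and (3), is that at most three of the quantities $\chi(\xi),\{\varphi(2^{-q}\xi)\}_{q\ge 0}$ are non-zero at a given $\xi$: at most two consecutive $\varphi$'s, because indices differing by at least two have disjoint supports, plus possibly $\chi$, which by (3) meets only the $q=0$ term. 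Applying Cauchy--Schwarz to these at most three non-zero summands, whose sum is $1$, yields $1=\big(\sum a_i\big)^2\le 3\sum a_i^2$, that is $\chi^2(\xi)+\sum_q\varphi^2(2^{-q}\xi)\ge 1/3$. Note that for fixed $\xi$ the series are in fact finite sums, so no convergence issue arises.

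The construction is essentially routine; the only genuine care lies in the choice of the radial profile and its support radii. Making $\theta$ non-increasing is exactly what guarantees the positivity $\varphi\ge 0$, and the precise radii $3/4$ and $4/3$ must be chosen so that the gaps in (2) open up and the overlap count stays bounded by three --- the latter being precisely what powers the $1/3$ lower bound. I expect this bookkeeping, rather than any analytic difficulty, to be the only thing to watch.
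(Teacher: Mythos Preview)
Your argument is correct and complete; the telescoping construction $\varphi=\chi(\cdot/2)-\chi$ with a radially non-increasing profile is the standard one, and your bookkeeping on the support radii and the Cauchy--Schwarz step for the $1/3$ lower bound are accurate. The paper itself does not prove this proposition at all---it simply refers the reader to Chemin's monograph \cite{che1}---so there is nothing to compare against beyond noting that your construction is essentially the one found there. One minor remark: your overlap count is slightly generous (in fact at most two of the functions are nonzero at any point, since $\chi$ meets only the $q=0$ annulus and that annulus is disjoint from $\mathrm{supp}\,\chi$ precisely where the $q=1$ annulus begins), but since the statement only asks for the bound $1/3$ your estimate with three terms is perfectly sufficient.
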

For every $u\in{\mathcal S}'(\mathbb R^3)$ we define the nonhomogeneous Littlewood-Paley operators by,
$$
\Delta_{-1}u=\chi(\hbox{D})u;\, \forall
q\in\mathbb N,\;\Delta_qu=\varphi(2^{-q}\hbox{D})u\; \quad\hbox{and}\quad
S_qu=\sum_{-1\leq j\leq q-1}\Delta_{j}u.
$$
One can easily prove that for every tempered distribution $u,$ we have 
\begin{equation}\label{dr2}
u=\sum_{q\geq -1}\Delta_q\,u.
\end{equation}

In the sequel we will  frequently use Bernstein inequalities (see for \mbox{example \cite{che1}}).
\begin{lem}\label{lb}\;
 There exists a constant $C$ such that for $k\in\NN$, \mbox{$1\leq a\leq b$}   and $u\in L^a$, we have
\begin{eqnarray*}
\sup_{|\alpha|=k}\|\partial ^{\alpha}S_{q}u\|_{L^b}
&\leq&
C^k\,2^{q(k+3(\frac{1}{a}-\frac{1}{b}))}\|S_{q}u\|_{L^a},
\end{eqnarray*}
and for $q\in\NN$
\begin{eqnarray*}
\ C^{-k}2^
{qk}\|{\Delta}_{q}u\|_{L^a}
&\leq&
\sup_{|\alpha|=k}\|\partial ^{\alpha}{\Delta}_{q}u\|_{L^a}
\leq
C^k2^{qk}\|{\Delta}_{q} u\|_{L^a}.
\end{eqnarray*}
\end{lem}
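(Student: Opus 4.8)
The plan is to reduce everything to Young's convolution inequality by exploiting the frequency localization of $S_qu$ and $\Delta_qu$. First I would fix auxiliary cut-offs: a function $\tilde\chi\in\mathcal{D}(\RR^3)$ equal to $1$ on $\textnormal{supp}\,\chi$ and supported in a slightly larger ball, and a function $\tilde\varphi\in\mathcal{D}(\RR^3\setminus\{0\})$ equal to $1$ on $\textnormal{supp}\,\varphi$ and supported in a slightly larger annulus. Since the Fourier transforms of $S_qu$ and $\Delta_qu$ live respectively in a ball of radius $\sim 2^q$ and in an annulus of size $\sim 2^q$, one has $\tilde\chi(2^{-q}\hbox{D})S_qu=S_qu$ and $\tilde\varphi(2^{-q}\hbox{D})\Delta_qu=\Delta_qu$.

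For the first inequality I write $\partial^\alpha S_qu=g_{\alpha,q}*S_qu$ with $g_{\alpha,q}=\mathcal{F}^{-1}\big[(i\xi)^\alpha\tilde\chi(2^{-q}\xi)\big]$. The change of variables $\xi=2^q\eta$ yields the scaling relation $g_{\alpha,q}(x)=2^{q(k+3)}\check P_\alpha(2^qx)$ with $k=|\alpha|$ and $P_\alpha(\eta)=(i\eta)^\alpha\tilde\chi(\eta)$, whence $\|g_{\alpha,q}\|_{L^c}=2^{q(k+3(\frac1a-\frac1b))}\|\check P_\alpha\|_{L^c}$, where $c$ is the Young exponent fixed by $\frac1c=1-(\frac1a-\frac1b)$ (note $c\ge1$ because $a\le b$). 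Applying $\|g_{\alpha,q}*S_qu\|_{L^b}\le\|g_{\alpha,q}\|_{L^c}\|S_qu\|_{L^a}$ gives the claimed estimate, modulo the bound $\|\check P_\alpha\|_{L^c}\le C^k$. The upper estimate for $\Delta_qu$ in the second line is the same argument with $\tilde\varphi$ in place of $\tilde\chi$ and $a=b$, so that $c=1$.

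The lower estimate $2^{qk}\|\Delta_qu\|_{L^a}\le C^k\sup_{|\alpha|=k}\|\partial^\alpha\Delta_qu\|_{L^a}$ needs an extra algebraic ingredient: since $\widehat{\Delta_qu}$ is supported away from the origin, I invert the Laplacian on the annulus. Using the multinomial identity $|\xi|^{2k}=\sum_{|\gamma|=k}\binom{k}{\gamma}\xi^{2\gamma}$ together with $\tilde\varphi$, one checks the exact reconstruction formula
\begin{equation*}
\Delta_qu=\sum_{|\gamma|=k}i^{-k}\binom{k}{\gamma}2^{-qk}\,h_\gamma(2^{-q}\hbox{D})\,\partial^\gamma\Delta_qu,\qquad h_\gamma(\xi)=\frac{\xi^\gamma}{|\xi|^{2k}}\tilde\varphi(\xi),
\end{equation*}
which holds because on $\textnormal{supp}\,\widehat{\Delta_qu}$ one has $\tilde\varphi(2^{-q}\xi)=1$ while $\sum_{|\gamma|=k}\binom{k}{\gamma}\xi^{2\gamma}=|\xi|^{2k}$. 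Each $h_\gamma$ is smooth and compactly supported in the annulus, so $\check h_\gamma$ is Schwartz and $h_\gamma(2^{-q}\hbox{D})$ acts on every $L^a$ with norm at most $\|\check h_\gamma\|_{L^1}$ (scale invariant). Summing over the $\sum_{|\gamma|=k}\binom{k}{\gamma}=3^k$ multi-indices and using Young's inequality closes this part, again contingent on $\|\check h_\gamma\|_{L^1}\le C^k$.

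The main obstacle, and really the only delicate point, is the \emph{uniform geometric} control of the constants, namely $\|\check P_\alpha\|_{L^c}\le C^k$ and $\|\check h_\gamma\|_{L^1}\le C^k$ with $C$ independent of $k$ and $q$. This I would obtain by a non-stationary phase bound: writing $(1+|x|^2)^2\check P_\alpha(x)$ as the Fourier transform of $(1-\lap_\xi)^2\big[(i\xi)^\alpha\tilde\chi(\xi)\big]$ and integrating over the fixed compact support of $\tilde\chi$, the Leibniz rule produces at most a polynomial-in-$k$ number of terms, each dominated by $R^{k}$ (with $R$ bounding $|\xi|$ on the support) times fixed derivatives of $\tilde\chi$; hence the $L^\infty$ weight estimate, and thus the $L^c$ norm, grows at most like $C^k$. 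The identical computation applies to $h_\gamma$, whose support avoids the origin so that $\xi^\gamma/|\xi|^{2k}$ and its derivatives up to order two are bounded there by $C^k$, which completes the proof.
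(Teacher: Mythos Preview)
The paper does not actually prove this lemma: it merely states the Bernstein inequalities and refers to \cite{che1} for the argument. Your proposal supplies precisely the standard proof that one finds in that reference --- frequency localization via auxiliary cut-offs, scaling, and Young's convolution inequality for the upper bounds, together with the multinomial inversion trick on the annulus for the lower bound --- and it is correct. The care you take with the geometric growth of the constants in $k$ (via the weighted estimate $(1+|x|^2)^2\check P_\alpha=\mathcal{F}^{-1}[(1-\Delta_\xi)^2P_\alpha]$ and the observation that only polynomially many terms appear, each bounded by $C^k$ on the fixed compact support) is exactly what is needed and is often glossed over in textbook presentations; there is nothing to add.
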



 Let $(p,r)\in[1,+\infty]^2$ and $s\in\mathbb R,$ then the Besov
\mbox{space $B_{p,r}^s$} is
the set of tempered distributions $u$ such that
$$
\|u\|_{B_{p,r}^s}:=\Big( 2^{qs}
\|\Delta_q u\|_{L^{p}}\Big)_{\ell^{r}}<+\infty.
$$
We remark that  the  Sobolev space $H^s$ agrees with  the  Besov space  $B_{2,2}^s$.
Also,  a straightforward consequence of  the Bernstein inequalities  is the
 following continous embedding: 
\begin{equation}
\label{besemb}
B^s_{p_1,r_1}\hookrightarrow
B^{s+3({1\over p_2}-{1\over p_1})}_{p_2,r_2}, \qquad p_1\leq p_2\quad \mbox{ and}  \quad  r_1\leq r_2.
\end{equation}

\



%

 For any  Banach space $X$ with norm $\| \cdot \|_{X}$ and fonctions $f(t,x)$
  such that  for every $t$,  $f(t, \cdot)\in X$,   we shall use the notation $ \| f\|_{L^p_tX}
   =  \| \| f  \|_{X} \|_{L^p([0, T])}.$
   
   A useful application of Besov spaces is the following  logarithmic estimate
    for convection diffusion equations.
\begin{prop}
\label{log}
There exists $C>0$ such that  for every  $\kappa\geq 0$, $ p\in[1,\infty]$ and for
 every   $\rho$  \mbox{solution of}
$$
(\partial_t+v\cdot\nabla-\kappa\Delta)\rho=f, \quad \rho(0,x)= \rho_{0}(x)
$$
with $v$ a divergence free vector field, 
 the following estimate holds true
$$
\|\rho (t) \|_{B_{p,1}^0}\leq C\Big( \|\rho_0\|_{B_{p,1}^0}+\|f\|_{L^1_tB_{p,1}^0}\Big)\Big( 1+\int_0^t\|\nabla v(\tau)\|_{L^\infty}d\tau \Big), \quad \forall t \geq 0.
$$
\end{prop}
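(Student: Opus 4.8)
The plan is to localize the equation in frequency, run an $L^p$ maximum-principle estimate on each dyadic block, control the commutator $[\Delta_q,v\cdot\nabla]$ by a Bony decomposition, and then sum over the blocks using the refinement of Vishik and of Hmidi--Keraani that upgrades a crude exponential Gronwall bound into the linear factor $1+\int_0^t\|\nabla v\|_{L^\infty}$.

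First I would apply $\Delta_q$ to the equation. Since $\Delta_q$ commutes with the Laplacian, the block $\rho_q:=\Delta_q\rho$ solves
$$
\partial_t\rho_q+v\cdot\nabla\rho_q-\kappa\Delta\rho_q=\Delta_qf-[\Delta_q,v\cdot\nabla]\rho,\qquad \rho_q(0)=\Delta_q\rho_0 .
$$
Testing against $|\rho_q|^{p-2}\rho_q$ and integrating over $\RR^3$, the convective term drops because $\textnormal{div}\,v=0$, while $-\kappa\int(\Delta\rho_q)\,|\rho_q|^{p-2}\rho_q=\kappa(p-1)\int|\rho_q|^{p-2}|\nabla\rho_q|^2\ge 0$. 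This last sign is favorable for \emph{every} $\kappa\ge 0$, which is precisely where the uniformity with respect to $\kappa$ comes from: the diffusion can only be discarded. After the usual limiting argument for the endpoints $p\in\{1,\infty\}$ one gets, for each $q\ge -1$,
$$
\|\rho_q(t)\|_{L^p}\le\|\Delta_q\rho_0\|_{L^p}+\int_0^t\|\Delta_qf(\tau)\|_{L^p}\,d\tau+\int_0^t\big\|[\Delta_q,v\cdot\nabla]\rho(\tau)\big\|_{L^p}\,d\tau .
$$

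Next I would estimate the commutator. Writing $v\cdot\nabla\rho$ via Bony's paraproduct decomposition, and putting the high--high frequency interaction in divergence form thanks to $\textnormal{div}\,v=0$, one obtains an almost-diagonal bound
$$
\big\|[\Delta_q,v\cdot\nabla]\rho(\tau)\big\|_{L^p}\le C\,\|\nabla v(\tau)\|_{L^\infty}\sum_{j}c_{q,j}\,\|\Delta_j\rho(\tau)\|_{L^p},
$$
where the kernel $(c_{q,j})$ is concentrated near the diagonal with geometric off-diagonal decay; the two crucial features are that only $\|\nabla v\|_{L^\infty}$ (not a stronger norm of $v$) enters, and that the low-frequency contributions of $\rho$ come with damping.

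It remains to sum over $q$, and this is the main obstacle. Inserting the commutator bound into the block estimate and summing naively yields, through Gronwall, only $\|\rho(t)\|_{B^0_{p,1}}\lesssim\big(\|\rho_0\|_{B^0_{p,1}}+\|f\|_{L^1_tB^0_{p,1}}\big)e^{C\int_0^t\|\nabla v\|_{L^\infty}}$; producing instead the factor $1+\int_0^t\|\nabla v\|_{L^\infty}$ requires the Vishik--Hmidi--Keraani argument. Concretely I would first deduce from the block estimate the weaker $B^0_{p,\infty}$ bound, then split $\sum_q\|\rho_q(t)\|_{L^p}$ at a threshold $N$ taken comparable to $\log\big(e+\int_0^t\|\nabla v\|_{L^\infty}\big)$, treating the low range $q\le N$ with the $B^0_{p,\infty}$ information and the finite number of scales, and the high range $q>N$ with the off-diagonal decay of $(c_{q,j})$ so that the tail is absorbed; balancing the two contributions produces exactly the claimed logarithmic, i.e.\ linear in $\int_0^t\|\nabla v\|_{L^\infty}$, growth. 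All constants are independent of $\kappa$ since, as noted, $\kappa$ never appears with an adverse sign. (Conceptually, when $\kappa=0$ the same bound is transparent from the Lagrangian representation $\rho(t)=\rho_0\circ\psi_t^{-1}+\int_0^tf(\tau)\circ(\psi_\tau\circ\psi_t^{-1})\,d\tau$, the flow $\psi_t$ being volume preserving with $\log\|\nabla\psi_t^{\pm1}\|_{L^\infty}\le\int_0^t\|\nabla v\|_{L^\infty}$, combined with the logarithmic composition estimate in $B^0_{p,1}$.)
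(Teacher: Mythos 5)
First, note that the paper does not actually prove Proposition \ref{log}: it refers to \cite{hk2} (and, behind it, Vishik's argument) for the proof. Your overall strategy --- localize with $\Delta_q$, run an $L^p$ estimate on each block discarding the dissipation thanks to $-\kappa\int \Delta\rho_q\,|\rho_q|^{p-2}\rho_q\ge 0$ (which is indeed the only place $\kappa$ enters, hence the uniformity), bound the commutator $[\Delta_q,v\cdot\nabla]\rho$ by $\|\nabla v\|_{L^\infty}$ times an almost-diagonal matrix acting on $(\|\Delta_j\rho\|_{L^p})_j$, and then perform a non-naive summation to beat the exponential Gronwall bound --- is exactly the strategy of the cited reference, and your closing parenthetical about the Lagrangian/composition proof when $\kappa=0$ is Vishik's original route.

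However, the summation step, which is the whole point of the proposition, does not close as you describe it. Two concrete problems. (i) The threshold must be taken as $N\sim V(t):=\int_0^t\|\nabla v\|_{L^\infty}$, not $N\sim\log\big(e+V(t)\big)$: the far contribution carries a factor $e^{CV(t)}2^{-cN}$, so only $N\gtrsim V(t)$ tames it, and it is precisely the near contribution, of size $O(N)$, that produces the linear factor $1+V(t)$. (ii) More seriously, splitting the sum over the output frequency $q$ alone (low $q\le N$ via $B^0_{p,\infty}$, high $q>N$ via off-diagonal decay of $c_{q,j}$) fails: after summing the block estimates over $q>N$, the commutator contributes $\int_0^t\|\nabla v\|_{L^\infty}\sum_{j\gtrsim N}\|\Delta_j\rho\|_{L^p}$, i.e.\ the high-frequency tail of the full $B^0_{p,1}$ norm with an $O(1)$ coefficient; nothing absorbs it when $V(t)$ is large. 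The correct mechanism decomposes the \emph{solution} by linearity, $\rho=\sum_j\rho^{(j)}$ with $\rho^{(j)}$ the solution issued from the data $\Delta_j\rho_0$ and forcing $\Delta_jf$; one then uses the $L^p$ maximum principle $\|\rho^{(j)}(t)\|_{L^p}\le\|\Delta_j\rho_0\|_{L^p}+\|\Delta_jf\|_{L^1_tL^p}$ (no growth) for the near-diagonal output blocks $|q-j|\le N$, and the bound $\|\Delta_q\rho^{(j)}(t)\|_{L^p}\lesssim e^{CV(t)}2^{-c|q-j|}\big(\|\Delta_j\rho_0\|_{L^p}+\|\Delta_jf\|_{L^1_tL^p}\big)$ (obtained from the block/commutator estimates) for $|q-j|>N$; choosing $N\sim V(t)$ and summing over $j$ gives the claimed bound. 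Your correct observation that $\|\rho(t)\|_{B^0_{p,\infty}}\lesssim\|\rho(t)\|_{L^p}$ is bounded uniformly in time is an ingredient of this argument, but only when applied blockwise to the pieces $\rho^{(j)}$, not to $\rho$ itself. A minor further point: the integration by parts giving the sign of the dissipative term is only literally valid for $p\ge2$; for $p<2$ and the endpoints one needs the standard regularization or the positivity lemma for $-\Delta$ acting on $|u|^{p-2}u$.
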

We refer to \cite{hk2} for the proof. Note that the amplification factor is only linear in 
$ \|\nabla v \|_{L^\infty}$.

\section{About an elliptic problem}
\label{sectionel}
The aim of this section is the study of the operator $\mathcal{L}= (\Delta+\frac2r)^{-1}\frac{\partial_r}{r} $.
This is the heart of the paper 
 since this is crucial to make rigorous the argument sketched in the introduction.
  This amounts to study  the regularity of the solution of an elliptic equation with singular coefficients.
   This is the goal  of the following proposition. 
\begin{prop}\label{prop1}
Let $\rho\in H^2(\RR^3)$    axisymmetric,    then
 there exists a unique axisymmetric  solution $f \in  H^2$ of the elliptic  problem
\begin{equation}
\label{eqel}
\Big(\Delta+\frac{2}{r}\partial_r\Big) f=\frac{\partial_r\rho}{r}.
\end{equation}
 Moreover,  for every $p\in [2, + \infty)$,  there exists an absolute constant $C_{p}>0$ such that:
\begin{equation}
\label{Lpprop1}
\|f\|_{L^p}\leq C_p\|\rho\|_{L^p}.
\end{equation}
\end{prop}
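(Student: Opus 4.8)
The plan is to produce an explicit formula for the solution by reducing \eqref{eqel} to a constant--coefficient Laplace equation, and then to treat existence/uniqueness in $H^2$ and the $L^p$ bound separately. On axisymmetric functions one has the elementary identities $\Delta+\frac2r\partial_r=\frac1{r^2}\,\mathrm{div}\big(r^2\nabla\,\cdot\,\big)$, together with
$$r\Big(\Delta+\tfrac2r\partial_r\Big)g=\Big(\Delta-\tfrac1{r^2}\Big)(rg),\qquad \Big(\Delta-\tfrac1{r^2}\Big)\partial_r h=\partial_r\Delta h .$$
Setting $w=rf$, equation \eqref{eqel} becomes $\big(\Delta-\tfrac1{r^2}\big)w=\partial_r\rho$; since $\big(\Delta-\tfrac1{r^2}\big)w$ is precisely the azimuthal component of the vector Laplacian of $w\,e_\theta$ and $\partial_r\rho\,e_\theta=-\mathrm{curl}(\rho\,e_z)$, inverting the (vector) Laplacian componentwise in Cartesian coordinates gives $w\,e_\theta=-\Delta^{-1}\mathrm{curl}(\rho\,e_z)=\nabla_h^{\perp}\Delta^{-1}\rho$, with $\nabla_h^{\perp}=(-\partial_{x_2},\partial_{x_1},0)$. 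Because $\rho$, hence $\psi:=\Delta^{-1}\rho$, is axisymmetric, this identity collapses to $w=\partial_r\psi$, so that (first for $\rho$ in a dense subclass, then extended) the solution is forced to be
$$f=\frac1r\,\partial_r\Delta^{-1}\rho=:\mathcal L\rho .$$
That this $f$ solves \eqref{eqel} is then checked directly from the two displayed identities: $\big(\Delta+\tfrac2r\partial_r\big)\tfrac1r\partial_r\psi=\tfrac1r\big(\Delta-\tfrac1{r^2}\big)\partial_r\psi=\tfrac1r\partial_r\Delta\psi=\tfrac{\partial_r\rho}{r}$.

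\emph{Uniqueness and existence in $H^2$.} If $f\in H^2$ is axisymmetric with $\mathrm{div}(r^2\nabla f)=0$, pairing with $f$ (the weight $r^2$ and the decay of $H^2$ functions make the integration by parts licit) gives $\int_{\mathbb R^3}r^2|\nabla f|^2\,dx=0$, hence $f\equiv0$; this gives uniqueness. For existence one can either verify directly that $\mathcal L\rho\in H^2$ when $\rho\in H^2$ — the $L^p$ bound below handles $f$, and $\partial_z^k f=\mathcal L\partial_z^k\rho$, while the remaining second derivatives are recovered from $\Delta f=\tfrac{\partial_r\rho}{r}-\tfrac2r\partial_r f$ together with $\tfrac{\partial_r\rho}{r}=\mathcal L(\Delta\rho)$ and elliptic regularity — or set up the weak problem in a suitable weighted space of axisymmetric functions (equivalently, observe that $\Delta+\tfrac2r\partial_r$ is the flat Laplacian on $\mathbb R^5$ restricted to $SO(4)$--invariant functions and invoke the classical $\dot H^1(\mathbb R^5)$–$\dot H^{-1}(\mathbb R^5)$ solvability), then use elliptic regularity — interior away from the axis, and the $\mathbb R^5$ lift near it — to reach $H^2$.

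\emph{The $L^p$ estimate.} For $p>2$ the bound is soft. Since $\psi=\Delta^{-1}\rho\in H^4_{\mathrm{loc}}\hookrightarrow C^2$ is axisymmetric, $\partial_r\psi$ vanishes on the axis and hence $f(r,z)=\tfrac1r\partial_r\psi(r,z)=\int_0^1(\partial_r^2\psi)(tr,z)\,dt$, with $|\partial_r^2\psi|\le|\nabla^2\psi|$. The horizontal dilation $h\mapsto h(t\,\cdot\,,\cdot)$ scales the $L^p(\mathbb R^3)$ norm by $t^{-2/p}$, so Minkowski's inequality gives $\|f\|_{L^p}\le\big(\int_0^1 t^{-2/p}\,dt\big)\|\partial_r^2\psi\|_{L^p}=\tfrac{p}{p-2}\|\nabla^2\Delta^{-1}\rho\|_{L^p}\le C_p\|\rho\|_{L^p}$ by Calderón--Zygmund. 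The constant blows up as $p\to2^{+}$ (the weighted Hardy inequality with exponents $(2,1)$ being exactly the endpoint), so $p=2$ — the case actually needed, and, as the authors emphasize, the heart of the section — must be done differently. For this I would pass to the explicit kernel: from $f=\tfrac1r\partial_r\Delta^{-1}\rho$ and the Newtonian potential one gets $\mathcal L\rho(x)=\int_{\mathbb R^3}K(x,y)\rho(y)\,dy$ with $K(x,y)=\dfrac{x_h\cdot(x_h-y_h)}{4\pi|x_h|^2|x-y|^3}$, whence $|K(x,y)|\lesssim\dfrac1{|x_h|\,|x-y|^2}$. This kernel is homogeneous of degree $-3$ but, because of the factor $|x_h|^{-1}$, is \emph{not} a Calderón--Zygmund kernel; the mechanism that saves the estimate is the axisymmetry of $\rho$, which lets one integrate out the azimuthal angle of $y$ and replace $K$ by an averaged (elliptic--integral type) kernel $\mathcal K(r,z;r',z')$ acting against the measure $r'\,dr'\,dz'$. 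One then establishes sharp pointwise bounds on $\mathcal K$ showing that this angular averaging restores the decay lost to the $|x_h|^{-1}$ factor, and concludes $L^p$--boundedness for every $p\in(1,\infty)$ — in particular $p=2$ — by a weighted Schur test, splitting into the regimes $r\lesssim r'$, $r\gtrsim r'$ and according to proximity to the axis. I expect \textbf{this kernel analysis to be the main obstacle}: one needs bounds on the angularly averaged kernel that remain good enough for a Schur estimate uniformly up to the axis, which is exactly where the apparent $|x_h|^{-1}$ singularity lives, and this is where the specific structure of the problem — the gain coming from averaging over rotations — has to be exploited.
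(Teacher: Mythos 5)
Your reduction to the explicit formula $f=\tfrac1r\partial_r\Delta^{-1}\rho$ is correct and is a genuinely different route from the paper, which never writes down a formula for $\mathcal L$: instead it regularizes the operator (replacing $\tfrac1r\partial_r$ by $\tfrac{1}{r^2+\eps}x_h\cdot\nabla$), derives the $L^p$ bound from a weighted energy identity combined with the Caffarelli--Kohn--Nirenberg inequality $\|f\|_{L^p}^p\le \tfrac{p^2}{4}\int |\nabla_h f|^2|f|^{p-2}r^2\,dx$, proves a separate uniform $H^2$ bound, and passes to the limit. However, as executed your argument has a genuine gap exactly at $p=2$: your dilation/Minkowski argument produces the constant $\tfrac{p}{p-2}$ and, as you correctly observe, the underlying two-dimensional Hardy inequality fails at the endpoint; the replacement you propose (angular averaging of the kernel $\tfrac{x_h\cdot(x_h-y_h)}{|x_h|^2|x-y|^3}$ plus a Schur test) is only a program, and you yourself flag it as the main obstacle. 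Since $p=2$ is part of the statement and is precisely the case the paper needs (to bound $\|\mathcal L\rho\|_{L^2}$ when estimating $\|\zeta\|_{L^2}$ from $\|\Gamma\|_{L^2}$), the proof is incomplete as written.

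The gap is avoidable without any kernel analysis, using an identity that in fact appears in the paper's own $H^2$ step. For an axisymmetric function $\psi$ one has
\begin{equation*}
\frac{\partial_r\psi}{r}=\Delta_h\psi-\partial_r^2\psi=\frac{x_2^2}{r^2}\,\partial_1^2\psi+\frac{x_1^2}{r^2}\,\partial_2^2\psi-2\,\frac{x_1x_2}{r^2}\,\partial_{12}^2\psi,
\end{equation*}
so that $|f|=|\tfrac1r\partial_r\Delta^{-1}\rho|\le 4\,|\nabla_h^2\Delta^{-1}\rho|$ pointwise, and Calder\'on--Zygmund theory for the iterated Riesz transforms gives $\|f\|_{L^p}\le C_p\|\rho\|_{L^p}$ for every $p\in(1,\infty)$, with no degeneration at $p=2$. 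With this substitution your approach closes and is arguably more transparent than the paper's; what the paper's weighted-energy route buys instead is that it never needs the representation $f=\tfrac1r\partial_r\Delta^{-1}\rho$ (whose justification for general $H^2$ data requires the density and limiting arguments you only sketch), and the same regularization scheme is reused verbatim for the companion operator $(\Delta+\tfrac2r\partial_r)^{-1}\tfrac{\partial_z}{r}$ of Proposition \ref{prop11}, for which no such clean explicit formula is available. Your uniqueness argument also needs a word of justification: for a general $H^2$ function the weighted energy $\int r^2|\nabla f|^2\,dx$ need not be finite, so the integration by parts against the weight $r^2$ should be replaced by the unweighted pairing with $f$ (writing $2\int \tfrac1r\partial_r f\,f\,dx=\int\partial_r(f^2)\,dr\,d\theta\,dz\le 0$), which is what the paper's ``standard energy inequality'' amounts to.
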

The important
  fact in this proposition is the $L^p$ estimate \eqref{Lpprop1} which only involves the $L^p$  norm  of $\rho$.
 An immediate consequence of Proposition \ref{prop1} is that $\mathcal{L}$  defines a bounded operator
  on $L^p$ for every $p \in [2, + \infty)$. 
    The  additional $H^2$ regularity   is  used to give a meaning to the equation.
      Because of the $1/r$ singularity on the axis, 
       we cannot    give a  meaning to the term $\partial_{r}f/r$ as a distribution when $f$ is merely $L^p$.
       When $f$ is in $H^2$, there is no problem  we have that $\partial_{r}f/r \in L^{1}_{loc}$ since for every compact set $K\subset\RR^3$
     $$  \|{\partial_{r} f  \over  r} \|_{L^1( K  )}
      \leq \| \nabla f \|_{L^6}  \, \|{1 \over r} \|_{L^{6 \over 5} (K)} <+ \infty$$
       thanks to the Sobolev embedding $H^1\subset L^6$ in dimension $3$.

\begin{proof}

 Let us first prove the existence of  a solution satisfying the required properties.
  We can  first assume that $\rho\in \mathcal{C}^\infty_{c}(\mathbb{R}^3)$ and
   then conclude by density.
 Since the  elliptic operator has some singular coefficients, we shall use an approximation
  argument. Since we have  by definition that $ r \partial_{r} = x_{h} \cdot \nabla$  with the notation
   $x_{h}= (x_{1}, x_{2}, 0), $ 
    we shall  consider for $\varepsilon>0$  the elliptic  problem
    \begin{equation}\label{eqel11}  \big(\Delta + {2 \over r^2 + \varepsilon} x_{h} \cdot \nabla \big) f= { 1 \over r^2 + \varepsilon}
     x_{h} \cdot \nabla \rho.
     \end{equation}
     Since the coefficients are not singular any more,  there is a 
       unique solution  $f^\eps$ for this problem given by the classical methods. By standard regularity arguments,
        this solution is in the Schwartz class
      and hence  the following a priori estimates are  justified.  Moreover, since $\rho$ is  axisymmetric, 
       $ f^\eps$ is also axisymmetric.      

 We shall first  prove that  the solution $f^\eps$ of \eqref{eqel11} satisfies   the estimate \eqref{Lpprop1}
  with a constant independent of $\eps$. In the proof of the a priori estimate, we shall  denote
   $f^\eps$ by $f$ for notational convenience.
  By taking the $L^2(\mathbb{R}^3)$ scalar  product of \eqref{eqel11} with    $(r^2+ \eps) |f|^{p-1}\hbox{sign}(f)$, we find  
 \begin{eqnarray}
 \label{id1}
&&\int_{\RR^3} \Delta f \,  |f|^{p-1}\hbox{sign}(f)(r^2+\eps)\,dx + 2 \int_{\RR^3} (x_{h} \cdot \nabla f ) \, |f|^{p-1}\hbox{sign}(f) \,dx\\
\nonumber&=&\int_{\RR^3} x_{h} \cdot \nabla \rho \,  |f|^{p-1}\hbox{sign}(f) \,dx.
\end{eqnarray}
 For the  first  term   in  the left-hand side, an integration by parts yields
\begin{eqnarray*}
 \int_{\RR^3} \Delta f \,  |f|^{p-1}\hbox{sign}(f)\, (r^2 + \eps) dx 
& =  &   - (p-1 )  \int_{\mathbb{R}^3} | f |^{p-2}\,  | \nabla f |^2\,(r^2+ \eps)  dx  \\
 & & -  2 
 \int_{\mathbb{R}^3} |f |^{p-1} \hbox{sign}(f) \, x_{h} \cdot  \nabla  f \, dx.  
 \end{eqnarray*}
  Consequently, we get that 
  \begin{eqnarray}
& & 
\label{id11}
 \int_{\RR^3} \Delta f \,  |f|^{p-1}\hbox{sign}(f)\,dx + 2 \int_{\RR^3} (x_{h} \cdot \nabla f ) \, |f|^{p-1}\hbox{sign}(f) \,dx
  \\
  \nonumber
  & &=  - (p-1 )  \int_{\mathbb{R}^3} | f |^{p-2}\,  | \nabla f |^2\,(r^2+ \eps)  dx.
  \end{eqnarray}
For the right-hand side of \eqref{id1}, we also obtain from an integration by parts that
\begin{eqnarray*}
\int_{\RR^3} x_{h} \cdot \nabla \rho \,  |f|^{p-1}\hbox{sign}(f) \,dx&   =  & - 2  \int_{\mathbb{R}^3}  \rho  |f |^{p-1}  \hbox{sign}(f) \, dx \\
& &  -  (p-1) \int_{\mathbb{R}^3}  \rho \, x_{h} \cdot \nabla f \,    \, |f|^{p-2} \, dx.
\end{eqnarray*}
 By using the H\"{o}lder inequality, this yields
 $$ 
  \Big| \int_{\RR^3} x_{h} \cdot \nabla \rho \,  |f|^{p-1}\hbox{sign}(f) \,dx \Big|
   \leq  2\,  \| \rho \|_{L^p} \,  \|f \|_{L^p}^{p-1} + (p-1)  \|f \|_{L^p }^{p-2 \over 2 }\,   \|  \rho \|_{L^p }
    \, \Big| \int_{\mathbb{R}^3}  |\nabla_{h} f|^2 \, |f |^{p-2}\, r^2 \, dx \Big|^{1\over 2}$$
    where $\nabla_{h}=(\partial_{1}, \partial_{2})^t$.
    By using  this last estimate, \eqref{id1} and \eqref{id11}, we  obtain that
     for every $\eps>0$
 \begin{eqnarray*}
 (p-1 )  \int_{\mathbb{R}^3} | f |^{p-2}\,  | \nabla f |^2\, r^2\,  dx & \leq
  &2\,  \| \rho \|_{L^p} \,  \|f \|_{L^p}^{p-1}   \\
   & &+ (p-1)  \|f \|_{L^p }^{p-2 \over 2 }\,   \|  \rho \|_{L^p }
    \, \Big| \int_{\mathbb{R}^3}  |\nabla_{h} f|^2 \, |f |^{p-2}\, r^2 \, dx \Big|^{1\over 2}.
    \end{eqnarray*}
  Consequently, by using the Young inequality, we get that 
\begin{equation}
\label{id13}
   \int_{\mathbb{R}^3} | f |^{p-2}\,  | \nabla f |^2\,r^2\,  dx  \leq
   C \Big(   \| \rho \|_{L^p} \,  \|f \|_{L^p}^{p-1} +  \|\rho\|_{L^p}^2 \, \|f \|_{L^p}^{p-2} \Big)
   \end{equation}
    for some $C>0$ independent of  $\eps$.
   To conclude,  we can  use the following inequality:   for $p\in[2,\infty[$, we have 
\begin{equation}\label{ineq}
\|f\|_{L^p}^p\le \frac{p^2}{4}\int|  \nabla_h f|^2 |f|^{p-2}\, r^2 dx.
\end{equation}
  This  is a special case of Caffarelli-Kohn-Nirenberg inequality \cite{CKN} (we shall recall the proof  
   will be given in the end of the proof of the proposition).  From \eqref{id13} and \eqref{ineq}, we obain that
  $$ \|f\|_{L^p}^2  \leq 
  C \Big(   \| \rho \|_{L^p} \,  \|f \|_{L^p}  +  \|\rho\|_{L^p}^2 \Big)$$
   where $C$ is  independent of $\eps$ and 
  and thus we obtain the estimate \eqref{Lpprop1}
   for the solution of \eqref{eqel11} by using the Young inequality.

  When  $\varepsilon$ goes to zero,  we get  the existence of    $f\in L^p $ and a subsequence $ \eps_{n}$
  such that   the solution $f^{\eps_{n}}$ of  \eqref{eqel11}  converges weakly to $f$
    which  satisfies the desired  estimate
$$
\|f\|_{L^p}\le C_p\|\rho\|_{L^p}.
$$
    In order to get that  $f$ solves the equation \eqref{eqel}, we need  more information on 
     $f^\eps$. Indeed, the difficulty is to give a meaning to the term $\partial_{r}f/r$
      which is not well-defined as a distribution when $f$ is merely $L^p$.
             We shall use a  uniform $H^2$  estimate for the solution of \eqref{eqel11}
        but which also  involves  the $H^2$ norm  of $\rho$. This is why we have required more regularity on 
        $\rho$. 
       At first, let us notice that since we assume that $\rho$ is  in $L^2$, we have
        that $f^\eps$ is uniformly bounded in $L^2$. Next,   
        we multiply \eqref{eqel11} by $\Delta f$, we get that 
  \begin{equation}
  \label{energyprop1} \int_{\mathbb{R}^3} | \Delta  f|^2 \, dx + 2 
     \int_{\mathbb{R}^3}   {x_{h}  \cdot \nabla f \over r^2 + \eps} \Delta f  \, dx=
       \int_{\mathbb{R}^3} {x_{h} \cdot \nabla \rho \over r^2 + \eps} \, \Delta f  \, dx.
      \end{equation}
    The right-hand side can be estimated by
  \begin{eqnarray*}
    \Big| \int_{\mathbb{R}^3} {x_{h} \cdot \nabla \rho \over r^2 + \eps} \,\Delta f \, dx \Big|
  &   \leq &   \|  {  \partial_{r} \rho  \over r } \|_{L^2} \|\Delta f \|_{L^2}.
   \end{eqnarray*}
  Since we assume that $\rho$ is axisymmetric, we can use  that
  $$   {\partial_{r} \rho \over r } =  \Delta \rho - \partial^2_{zz}-  \partial_{r}^2 \rho=
  {x_{2}^2 \over r^2} \partial_{1}^2 \rho + {x_{1}^2 \over r^2} \partial_{2}^2\rho - 2{x_{1} x_{2} \over r^2}
   \partial_{12}^2\rho$$
    and hence that we have the estimate
   $$  \big\|  {  \partial_{r} \rho  \over r } \big\|_{L^2} \leq  4 \| \rho \|_{H^2}.$$
  This yields
  \begin{equation}
  \label{energy2prop1}
   \Big| \int_{\mathbb{R}^3} {x_{h} \cdot \nabla \rho \over r^2 + \eps} \,\Delta f \, dx \Big|
     \leq   4  \| \rho  \|_{H^2} \|\Delta f \|_{L^2}.
  \end{equation}
    
  Next, we can study the second term in the left-hand side of \eqref{energyprop1}.
   We have by integration by parts that
   \begin{eqnarray*}
   2   \int_{\mathbb{R}^3}   {x_{h}  \cdot \nabla f \over r^2 + \eps} \Delta f  \, dx
  &  =  & - \int_{\mathbb{R}^3}  {x_{h} \over r^2 + \eps } \cdot \nabla \big( |\nabla f |^2 \big)
   -  2 \int_{\mathbb{R}^3} \Big( \nabla  \big({ x_{h} \over r^2 + \eps} \big) \cdot \nabla \Big) f \cdot \nabla f \,  \\
    & = &    \int_{\mathbb{R}^3} \nabla \cdot \big(   {x_{h} \over r^2 + \eps } \big)\, |\nabla f |^2
     -   2  \int_{\mathbb{R}^3} \Big( \nabla  \big({ x_{h} \over r^2 + \eps} \big) \cdot \nabla \Big) f \cdot \nabla f
    dx.
   \end{eqnarray*}
Next,  we  infer  
  $$  \nabla \cdot \big( {x_{h} \over r^2 + \eps} \big)=  { 2 \eps \over (r^2 + \eps)^2}$$
  and since $f$ is axisymmetric
  $$ \Big( \nabla  \big({ x_{h} \over r^2 + \eps} \big) \cdot \nabla \Big) f \cdot \nabla f
  = \Big({1 \over r^2 + \eps} - {2 r^2  \over (r^2+ \eps)^2} \Big) |\partial_{r} f|^2.$$
  This yields 
\begin{equation}
\label{energy3prop1}   2   \int_{\mathbb{R}^3}   {x_{h}  \cdot \nabla f \over r^2 + \eps} \Delta f  \, dx
 = 2 \int_{\mathbb{R}^3} { r^2 \over (r^2 + \eps)^2} |\partial_{r} f |^2 \, dx \geq 0.
 \end{equation}
  Consequently, we get from \eqref{energyprop1}, \eqref{energy2prop1} and \eqref{energy3prop1}
   that 
   $$ \|\Delta f \|_{L^2}  \leq   4 \|\rho \|_{H^2}$$
    and hence that 
    \begin{equation}
    \label{fepsH1}
     \| f \|_{H^2} \leq C \|\rho \|_{H^2}
     \end{equation}
     with $C$ independent of $\eps$.
      From this uniform $H^2$ estimate for $f^\eps$, we   get  that $f^{\eps_{n}}$
       (up to a subsequence not relabelled)
       converges weakly in $H^2$  to some $f \in H^2$ and then that 
        $f$   is a weak solution of   \eqref{eqel}.
         This ends the proof of the existence of  a solution. The uniqueness  is  a consequence  
           of  the  standard energy inequality.

     Let us now come back to the proof of \eqref{ineq}. 
Since  $\hbox{div} (x_h)=2$, by  integrating by parts, we obtain
\begin{eqnarray*}
2\|f\|_{L^p}^p&=&-\int_{\RR^3}x_h\cdot\nabla_h(|f|^p)dx\\
&=&-p\int_{\RR^3}(r\partial_r f)|f|^{p-1}\hbox{sign}(f)dx\\
&\le& p\Big(\int_{\RR^3}|r\partial_r f|^2 |f|^{p-2}dx\Big)^{\frac12}\|f\|_{L^p}^{\frac{p}{2}}.
\end{eqnarray*}
Therefore we find the desired estimate,
$$
\|f\|_{L^p}^p\le\frac{p^2}{4}\int_{\RR^3}|r\partial_r f|^2 |f|^{p-2}dx.
$$

 This ends the proof of Proposition \ref{prop1}.

  \end{proof}

  In the proof of the main result, we shall also need to use  the operator
   $\big( \Delta + {2 \over r} \partial_{r}\big)^{-1}\partial_{z}/r$.
     The aim of the following proposition is to define rigorously this operator.

 \begin{prop}\label{prop11}
Let $\rho\in L^2(\RR^3)$  be axisymmetric such that $\partial_{z} \rho /r \in L^2(\mathbb{R}^3)$,     then
 there exists a unique  axisymmetric  solution $f \in  H^2$ of the elliptic  problem
\begin{equation}
\label{eqel1}
\Big(\Delta+\frac{2}{r}\partial_r\Big) f=\frac{\partial_z\rho}{r}.
\end{equation}
 Moreover,  for every $p\in [2, + \infty)$,  there exists an absolute constant $C_{p}>0$ such that:
\begin{equation}
\label{Lpprop11}
\|f\|_{L^p}\leq C_p\|\rho\|_{L^p}.
\end{equation}
\end{prop}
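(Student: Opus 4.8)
The plan is to follow the proof of Proposition \ref{prop1} almost verbatim, with the source term $\partial_r\rho/r$ replaced by $\partial_z\rho/r$; in fact the argument becomes slightly simpler because $\partial_z$ commutes with all the radial coefficients. First I would reduce to the case $\rho\in\mathcal{C}^\infty_c(\RR^3)$ axisymmetric (by density), and then desingularize the elliptic operator: for $\eps>0$, using $r\partial_r=x_h\cdot\nabla$ with $x_h=(x_1,x_2,0)$, I would consider
\[
\Big(\Delta+\frac{2}{r^2+\eps}\,x_h\cdot\nabla\Big)f^\eps=\frac{r}{r^2+\eps}\,\partial_z\rho .
\]
This problem has a unique Schwartz solution $f^\eps$ by classical methods, and $f^\eps$ is axisymmetric since $\partial_z\rho$ is. Note that the regularized source is bounded, is dominated pointwise by $|\partial_z\rho|/r$, and converges to $\partial_z\rho/r$ as $\eps\to0$; these three properties are what the rest of the proof uses.

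For the $L^p$ estimate, $p\in[2,\infty)$, I would test the equation against $(r^2+\eps)|f^\eps|^{p-1}\mathrm{sign}(f^\eps)$. The left-hand side is handled exactly as in \eqref{id1}--\eqref{id11} and gives $-(p-1)\int_{\RR^3}(r^2+\eps)|f^\eps|^{p-2}|\nabla f^\eps|^2\,dx$. For the right-hand side, since the coefficient $r/(r^2+\eps)$ depends only on $r$, an integration by parts in the $z$ variable turns $\int_{\RR^3} r\,\partial_z\rho\,|f^\eps|^{p-1}\mathrm{sign}(f^\eps)\,dx$ into $-(p-1)\int_{\RR^3} r\,\rho\,|f^\eps|^{p-2}\partial_z f^\eps\,dx$, which by H\"older's inequality with exponents $(p,2,\tfrac{2p}{p-2})$ is bounded by $(p-1)\|\rho\|_{L^p}\|f^\eps\|_{L^p}^{(p-2)/2}\big(\int_{\RR^3}r^2|\partial_z f^\eps|^2|f^\eps|^{p-2}\,dx\big)^{1/2}$. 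Since $\int r^2|\partial_z f^\eps|^2|f^\eps|^{p-2}\le\int(r^2+\eps)|\nabla f^\eps|^2|f^\eps|^{p-2}$, the weighted gradient term is absorbed, leaving $\int_{\RR^3}r^2|\nabla_h f^\eps|^2|f^\eps|^{p-2}\,dx\le\|\rho\|_{L^p}^2\|f^\eps\|_{L^p}^{p-2}$; the Caffarelli--Kohn--Nirenberg inequality \eqref{ineq} then yields $\|f^\eps\|_{L^p}\le\frac p2\|\rho\|_{L^p}$, uniformly in $\eps$.

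For the uniform $H^2$ bound I would test against $\Delta f^\eps$: the drift term produces the nonnegative contribution $2\int r^2(r^2+\eps)^{-2}|\partial_r f^\eps|^2\,dx\ge0$ exactly as in \eqref{energy3prop1}, while the right-hand side is estimated by $\|\partial_z\rho/r\|_{L^2}\|\Delta f^\eps\|_{L^2}$ because $r/(r^2+\eps)\le 1/r$; together with the $L^2$ bound above this gives $\|f^\eps\|_{H^2}\le C(\|\rho\|_{L^2}+\|\partial_z\rho/r\|_{L^2})$ as in \eqref{fepsH1} (note that the hypotheses already force $\partial_z\rho\in L^2(\RR^3)$, since $|\partial_z\rho|\le|\partial_z\rho/r|$ where $r\le1$). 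Letting $\eps\to0$ along a subsequence, the weak limits of $f^\eps$ in $L^p$ and in $H^2$ coincide and provide an axisymmetric $f\in H^2$ which solves \eqref{eqel1} and satisfies \eqref{Lpprop11}; the $H^2$ regularity ensures $\partial_r f/r\in L^1_{loc}$, so the equation is meaningful. Uniqueness follows from the standard energy inequality exactly as in Proposition \ref{prop1} (a homogeneous axisymmetric $H^2$ solution satisfies $\mathrm{div}(r^2\nabla f)=0$, hence $\int r^2|\nabla f|^2\,dx=0$ and $f=0$), and a final density argument removes the smoothness assumption on $\rho$.

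I do not expect a serious obstacle: the structure is parallel to Proposition \ref{prop1}, and because the integration by parts now falls on the non-singular variable $z$ the $L^p$ step is in fact cleaner. The only genuine points of care are the choice of the $\eps$-regularization of $\partial_z\rho/r$ (it must be bounded, dominated by the target, and convergent) and the justification of the passage to the limit and of the density step for a general $\rho$ with merely $\rho\in L^2$ and $\partial_z\rho/r\in L^2$.
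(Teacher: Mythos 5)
Your argument is essentially the paper's proof: the paper regularizes the right-hand side as $\partial_z\rho/\sqrt{r^2+\eps}$ rather than your $r\,\partial_z\rho/(r^2+\eps)$, but this is immaterial, and every subsequent step (testing against $(r^2+\eps)|f|^{p-1}\mathrm{sign}(f)$, integrating by parts in $z$, absorbing the weighted gradient term, invoking \eqref{ineq}, then testing against $\Delta f$ and using \eqref{energy3prop1} for the $H^2$ bound before passing to the limit) coincides with the paper's. The only quibble is your parenthetical claim that the hypotheses force $\partial_z\rho\in L^2(\RR^3)$ --- the inequality $|\partial_z\rho|\le|\partial_z\rho/r|$ holds only where $r\le 1$ --- but that assertion is never used in your argument, so nothing is affected.
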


 Again, the important fact is the estimate \eqref{Lpprop11} 
  which only involves the $L^p$ norm of  $\rho$. From this estimate, we get   that the operator
  $\big( \Delta + {2 \over r} \partial_{r}\big)^{-1}\partial_{z}/r$  is a bounded operator on $L^p$.
  The additional regularity $\rho\in L^2$, $\partial_{z} \rho/r\in L^2$ is again only used to get 
     the $H^2$ regularity  on the solution which allows to give a meaning to  the equation.
     
  Note that the assumptions on $\rho$  here are different from the one of Proposition \ref{prop1}.
   This comes from the fact that we shall need to  use 
    $\big( \Delta + {2 \over r} \partial_{r} \big)^{-1} {\partial_{r} \over r} \rho$
     when $\rho$ is a smooth solution of \eqref{bsintro} whereas, we shall only need
      to use $ \big( \Delta + {2 \over r} \partial_{r} \big)^{-1} {\partial_{z} \over r} \big(v^r \rho\big)$
       where $(v, \rho)$ is a smooth solution of \eqref{bsintro}.
               In the first case, as we have seen in the proof, 
         the    regularity on $\rho$ ensures that $\partial_{r}\rho/r$ is in $L^2$.  In a similar way, 
         in  the second case
           for a smooth solution of \eqref{bsintro} such that $\omega/r
\in L^2$, we indeed have that  ${\partial_{z } \over r} ( v^r \rho)\in L^2$.

 \begin{proof}
 The proof follows the same lines as the proof  of Proposition \ref{prop1}.
  Consequently, we shall just indicate the main difference.
   We now consider the regularized problem
  \begin{equation}
  \label{reg2} \big(\Delta + {2 \over r^2 + \varepsilon} x_{h} \cdot \nabla_{h} \big) f= {\partial_{z} \rho \over \sqrt{r^2 + \eps}}. \end{equation}
   By  multiplying the equation with $(r^2 + \eps) |f|^{p-1} \textnormal{sign}(f)$, we get by using again \eqref{id11}
   and an integration by parts for the right-hand side that 
 $$  \int_{\mathbb{R}^3} |f|^{p-2} \, |\nabla f |^2(r^2 + \eps)\, dx
 \leq  \int_{\mathbb{R}^3} \sqrt{r^2 + \eps} \, |  \rho| \, |f|^{p-2}\,  |\partial_{z} f | \, dx.$$
 From the H\"{o}lder inequality, we obtain that 
$$ \Big( \int_{\mathbb{R}^3} |f|^{p-2} \, |\nabla f |^2 r^2\, dx  \Big)^{1\over 2} 
 \leq \|\rho \|_{L^p } \, \|f \|_{L^p }^{p- 2 \over 2}$$
  and hence by using again \eqref{ineq}, we finally get  that
  $$ \|f \|_{L^p} \leq C_{p} \|\rho \|_{L^p}.$$
 This will give the estimate \eqref{Lpprop11}  by passing to the limit.
 
 To prove  an  $H^2$ estimate on $f$, we again multiply  \eqref{reg2} by $\Delta f$, 
  since $f$ is axisymmetric, we get 
  from \eqref{energy3prop1}  that
 $$ \| \Delta f \|_{L^2} \leq  \|{\partial_{z} \rho \over r} \|_{L^2}$$
 and this provides the $H^2$ estimate for $f$. We can then pass to the limit to get a solution of 
  \eqref{eqel1} with the  claimed properties. The uniqueness follows from the classical energy estimate.
  This ends the proof of Proposition \ref{prop11}.
  
 \end{proof}

The aim of the next two lemmas is to prove some  identities involving
 the operator $\mathcal{L}=\big(\Delta+{2\over r} \partial_{r }\big)^{-1}\frac{\partial_r}{r}$ which will be useful  to get  the equation satisfied by $\mathcal{L}\rho$
  in order to diagonalize the system.
  
  At first, we have
\begin{lem}\label{leme1} For any  smooth axisymmetric  function $f$ we have
 the identity : 
 $$ \mathcal{L} \,\partial_{r} f = {f\over r} - \mathcal{L}( {f \over r })  - \partial_{z}  \big( \Delta + {2 \over r } \partial_{r}
  \big)^{-1}  {\partial_{z} f \over r }.$$

\end{lem}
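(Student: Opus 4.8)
The plan is to verify the identity by applying the elliptic operator $A:=\Delta+\frac{2}{r}\partial_r$ to both sides and checking that the two images coincide; the identity then follows from the injectivity of $A$ on axisymmetric $H^2$ functions, which is precisely the uniqueness part of Proposition \ref{prop1} (resp. Proposition \ref{prop11}). The only structural fact needed beyond the defining relation $A\mathcal{L}=\frac{\partial_r}{r}$ is that $A$ commutes with $\partial_z$: indeed $\Delta$ commutes with $\partial_z$, and the coefficient $2/r$ of the first order term does not depend on $z$, so that $A\partial_z=\partial_z A$, hence $A\,\partial_z A^{-1}=\partial_z$ on the relevant spaces.

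Applying $A$ to the left-hand side gives $A\mathcal{L}\,\partial_r f=\frac{\partial_r}{r}\,\partial_r f=\frac{1}{r}\partial_r^2 f$. Applying $A$ to the right-hand side and using $A\mathcal{L}\big(f/r\big)=\frac{1}{r}\partial_r\big(f/r\big)$ together with $A\,\partial_z A^{-1}\big(\partial_z f/r\big)=\partial_z\big(\partial_z f/r\big)=\frac{1}{r}\partial_z^2 f$ (here $1/r$ does not depend on $z$), one is reduced to showing
$$
A\Big(\frac{f}{r}\Big)-\frac{1}{r}\partial_r\Big(\frac{f}{r}\Big)-\frac{1}{r}\partial_z^2 f=\frac{1}{r}\partial_r^2 f .
$$
I would then write everything in cylindrical coordinates, using that for an axisymmetric function $g$ one has $A g=\partial_r^2 g+\frac{3}{r}\partial_r g+\partial_z^2 g$. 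Since $\partial_z^2(f/r)=\frac{1}{r}\partial_z^2 f$, the $z$-derivatives cancel and it remains to check the purely radial identity $\big(\partial_r^2+\frac{2}{r}\partial_r\big)\big(f/r\big)=\frac{1}{r}\partial_r^2 f$, which is elementary: expanding $\partial_r(f/r)=\frac{\partial_r f}{r}-\frac{f}{r^2}$ and $\partial_r^2(f/r)=\frac{\partial_r^2 f}{r}-\frac{2\partial_r f}{r^2}+\frac{2f}{r^3}$, all the lower order terms cancel exactly. Thus $A$ sends both sides of the claimed identity to $\frac{1}{r}\partial_r^2 f$.

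To conclude rigorously, one checks that, under the decay/vanishing hypotheses on $f$ used in the applications, each of the three terms on the right-hand side is a well-defined axisymmetric function in $H^2$: the first and second by Proposition \ref{prop1} applied to $f/r$ (so that $\partial_r(f/r)/r\in L^2$), the third by Proposition \ref{prop11} applied with the datum $\partial_z f/r\in L^2$. Hence $\mathcal{L}\,\partial_r f$ and the right-hand side are two axisymmetric $H^2$ solutions of $A u=\frac{1}{r}\partial_r^2 f$, and the uniqueness statement of Proposition \ref{prop1} forces them to be equal. The point requiring care is exactly this well-definedness, namely that the hypotheses on $f$ guarantee $f/r$, $\partial_r f/r$ and $\partial_z f/r$ are genuinely in $L^2$ so that $\mathcal{L}$ and $(\Delta+\frac2r\partial_r)^{-1}\partial_z/r$ may be applied, and that the formal commutations with $A^{-1}$ above are legitimate on these spaces; the algebraic identity itself is routine.
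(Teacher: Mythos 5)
Your proof is correct and rests on exactly the same computation as the paper's: the radial identity $\big(\partial_{rr}+\tfrac{2}{r}\partial_r\big)\big(\tfrac{f}{r}\big)=\tfrac{1}{r}\partial_{rr}f$ together with the commutation of $\partial_z$ with $\big(\Delta+\tfrac{2}{r}\partial_r\big)^{-1}$. The only difference is cosmetic: the paper decomposes $\tfrac{1}{r}\partial_{rr}f$ and applies the inverse operator, whereas you apply the operator to both sides and invoke uniqueness — the same argument run in the opposite direction.
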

\begin{proof}
 We first obtain that 
\begin{eqnarray*}
\frac1r\partial_{rr}f&=&\partial_r(\frac1r\partial_rf)+\frac{1}{r^2}\partial_rf\\
&=&\partial_r\big(\partial_r( {f \over r})+{f\over r^2}\big)+\frac1{r^2}\partial_rf\\
&=&\partial_{rr}({f \over r})+{2\over r^2} \partial_r f -{2 \over r^3}f\\
&=&\big(\partial_{rr}+ {2 \over r }\partial_r\big)({f \over r})\\
&=&\big(\Delta+{  2\over r}\partial_{r}\big)({f \over r})-{1 \over r }\partial_r({f \over r})-\partial_{zz}({f\over r }).
\end{eqnarray*}
It follows that
\begin{eqnarray*}
\big(\Delta+{2\over r} \partial_{r }\big)^{-1}(\frac1r\partial_{rr}\, f)&=&{f \over r}-\big(\Delta+ {2\over r} \partial_{r}\big)^{-1}(\frac1r\partial_{r}(
{f \over r }))-\big(\Delta+{2 \over r} \partial_{r }\big)^{-1}(\partial_{zz}({f \over r }))
\\
&=&{f \over r}-\mathcal{L}({f \over  r})-\partial_z\big(\Delta+{2\over r} \partial_{r }\big)^{-1}({\partial_{z} f \over r}).
\end{eqnarray*}
Note that  we  have used   the fact that the operators $\big(\Delta+{2 \over  r} \partial_{r}\big)^{-1}$ and $\partial_z$ commute since the coefficients of the operator $\Delta+\frac{2}{r}\partial_r$ do not depend on the variable $z$.
 This is the desired identity and therefore, this ends the proof of Lemma \ref{leme1}.
\end{proof}

We shall also use the following:
\begin{lem}
\label{lemLD}
For every smooth axisymmetric function $\rho$, we have the identity:
$$  \mathcal{L}\, \Delta \rho= \big( \Delta + {2 \over r} \partial_{r} \big) \, \mathcal{L} \rho.$$
\end{lem}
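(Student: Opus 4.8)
The plan is to peel off the two inverse operators and reduce the identity to a pointwise commutation relation valid on axisymmetric functions. Write $\mathcal{A}=\Delta+\frac2r\partial_r$, so that $\mathcal{L}=\mathcal{A}^{-1}\frac{\partial_r}{r}$ and $\mathcal{A}\,\mathcal{L}\rho=\frac{\partial_r\rho}{r}$ by the very definition of $\mathcal{L}$. Hence the asserted identity $\mathcal{L}\,\Delta\rho=\mathcal{A}\,\mathcal{L}\rho$ is equivalent to $\mathcal{A}^{-1}\big(\frac1r\partial_r\Delta\rho\big)=\frac{\partial_r\rho}{r}$, i.e. to the pointwise statement
\begin{equation*}
\frac1r\partial_r(\Delta\rho)=\Big(\Delta+\frac2r\partial_r\Big)\Big(\frac{\partial_r\rho}{r}\Big).
\end{equation*}
So the first step is to record this reduction, taking care that $\mathcal{A}^{-1}$ may legitimately be applied to $\frac1r\partial_r\Delta\rho$: for $\rho$ smooth with sufficient decay, $\rho$ and $\Delta\rho$ lie in $H^2$, and $\frac{\partial_r\rho}{r},\frac{\partial_r\Delta\rho}{r}\in L^2$ by the axisymmetry formula $\frac{\partial_r\rho}{r}=\frac{x_2^2}{r^2}\partial_1^2\rho+\frac{x_1^2}{r^2}\partial_2^2\rho-\frac{2x_1x_2}{r^2}\partial_{12}^2\rho$ already used in the proof of Proposition \ref{prop1}. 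Thus Proposition \ref{prop1} applies both to $\rho$ and to $\Delta\rho$, so $\mathcal{L}\rho$ and $\mathcal{L}\Delta\rho$ are well defined in $H^2$, and the identity will follow from the uniqueness part of that proposition once the pointwise relation above is established.

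The second step is that pointwise relation, which I would obtain from two elementary computations in cylindrical coordinates, where $\Delta=\partial_{rr}+\frac1r\partial_r+\partial_{zz}$ on axisymmetric functions. First, the commutator of $\partial_r$ with $\Delta$: a direct expansion gives $\partial_r\Delta\rho-\Delta\partial_r\rho=-\frac{1}{r^2}\partial_r\rho$. Second, for any smooth axisymmetric $g$ one has $\big(\Delta+\frac2r\partial_r\big)(g/r)=\frac1r\big(\Delta g-\frac{g}{r^2}\big)$; this is exactly the computation underlying the equation for $\zeta=\omega_\theta/r$ recalled in the Introduction, and it follows from $\nabla(1/r)=-\frac1{r^2}e_r$, $\Delta(1/r)=\frac1{r^3}$ and Leibniz' rule. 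Applying the second identity with $g=\partial_r\rho$ and substituting the first in the form $\Delta\partial_r\rho-\frac{\partial_r\rho}{r^2}=\partial_r\Delta\rho$ gives $\big(\Delta+\frac2r\partial_r\big)\big(\frac{\partial_r\rho}{r}\big)=\frac1r\partial_r\Delta\rho$, which is what is needed. Applying $\mathcal{A}^{-1}$ then yields $\mathcal{L}\Delta\rho=\frac{\partial_r\rho}{r}=\mathcal{A}\mathcal{L}\rho$.

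I do not expect a serious obstacle: the algebra of the two computations is routine, and the second one has essentially already appeared in the Introduction. The only point deserving attention is the functional-analytic bookkeeping of the first step — ensuring that $\mathcal{L}\Delta\rho$ is meaningful and that one is entitled to invert $\mathcal{A}$ — which is handled by the $L^2$ control of $\partial_r(\cdot)/r$ for axisymmetric functions and by the uniqueness statement of Proposition \ref{prop1} (or, in the variant needed elsewhere, Proposition \ref{prop11}). One could alternatively try to deduce the identity from Lemma \ref{leme1}, but the direct route above is shorter.
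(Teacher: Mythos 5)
Your argument is correct and is essentially the paper's own proof: both reduce the lemma to the pointwise identity $\big(\Delta+\frac{2}{r}\partial_r\big)\big(\frac{\partial_r\rho}{r}\big)=\frac1r\partial_r\Delta\rho$ and then conclude by the uniqueness of the solution of the elliptic problem, the only difference being that the paper organizes the computation through the commutator $\big[\Delta,\frac1r\partial_r\big]=-\frac2r\partial_r\big(\frac1r\partial_r\cdot\big)$ while you split it into $[\partial_r,\Delta]$ plus the conjugation identity $\big(\Delta+\frac2r\partial_r\big)(g/r)=\frac1r\big(\Delta g-\frac{g}{r^2}\big)$. Your extra remarks on the functional-analytic bookkeeping are consistent with (and slightly more explicit than) what the paper does.
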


\begin{proof}
 At first, by direct computations, we find that
 \begin{equation}
 \label{com1} \big[ \Delta, {1 \over r  } \partial_{r} \big]  = - 2 \big( { 1 \over r^2} \partial_{r}^2 - {1\over r^3  }\partial_{r}\big)
  =  - { 2 \over r} \partial_{r}\big( {1 \over r } \partial_{r} \cdot \big).\end{equation}
 Now, let us consider $f = \mathcal{L} \, \rho$. By definition, $f$ solves the elliptic equation
 $$ \big( \Delta + { 2 \over r } \partial_{r} \big)  f =  { 1 \over r } \partial_{r} \rho.$$
 Consequently, we get that 
 $  u =  \big( \Delta + { 2 \over r } \partial_{r} \big)  f $ solves the elliptic equation:
$$  \big( \Delta + { 2 \over r } \partial_{r} \big) u=   \big( \Delta + { 2 \over r } \partial_{r} \big)\big(  { 1 \over r } 
 \partial_{r} \rho \big).$$
 By using the formula \eqref{com1}, we  obtain for    the right-hand side 
 $$    \big( \Delta + { 2 \over r } \partial_{r} \big)\big(  { 1 \over r } 
 \partial_{r} \rho \big) = { 1 \over r} \partial_{r} \big( \Delta \rho  +  { 2 \over r } \partial_{r} \rho  \big) + 
   \big[ \Delta, {1\over r } \partial_{r} \big] \rho=
   { 1 \over r} \partial_{r}    \big( \Delta \rho  +  { 2 \over r } \partial_{r} \rho - {2 \over r } \partial_{r} \rho  \big)
   = {1 \over r } \partial_{r} \Delta \rho.$$
 This proves that $u$ solves the equation
 $$\big( \Delta + { 2 \over r } \partial_{r} \big) u=   {1 \over r } \partial_{r} \Delta \rho$$
  and hence that $u = \mathcal{L} \Delta \rho$.
   Since $u=   \big( \Delta + { 2 \over r } \partial_{r} \big)  f= 
     \big( \Delta + { 2 \over r } \partial_{r} \big) \mathcal{L}\, \rho$, this ends the proof
      of \mbox{Lemma \ref{lemLD}.}

\end{proof}

\section{A priori estimates}
\label{sectionapriori}
This section is devoted to the a priori estimates needed for the proof of Theorem \ref{thm1}. We shall prove
two   results: the first one deals with some basic  energy estimates.  The second one
 which is  more difficult  
deals  with the control of some stronger norms.

\begin{prop}\label{Energy}
Let $(v,\rho)$  be a smooth solution of \eqref{bsintro} then
\begin{enumerate}
\item for $p\in[1,\infty]$ and $t\in\RR_+$,  we have
$$
\|\rho(t)\|_{L^p}\le\|\rho_0\|_{L^p}, 
$$
\item for $v_0\in L^2, \rho_0\in L^2$ and $t\in\RR_+$ we have
$$
\|v(t)\|_{L^2}^2+\int_0^t\|\nabla v(\tau)\|_{L^2}^2d\tau\le C_0(1+t^2),
$$
where $C_0$ depends only on $\|v_0\|_{L^2}$ and $\|\rho_0\|_{L^2}$.

\end{enumerate}

\end{prop}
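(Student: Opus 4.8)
The plan is to treat the two statements separately, both being classical energy-type estimates adapted to the Boussinesq coupling. For part (1), since $\kappa \geq 0$ and $\operatorname{div} v = 0$, the transport-diffusion equation $\partial_t \rho + v\cdot\nabla\rho - \kappa\Delta\rho = 0$ propagates all $L^p$ norms: multiplying by $|\rho|^{p-2}\rho$ and integrating, the convection term $\int (v\cdot\nabla\rho)|\rho|^{p-2}\rho\,dx = \frac1p\int v\cdot\nabla(|\rho|^p)\,dx = 0$ by the divergence-free condition, while the diffusion term contributes $-\kappa(p-1)\int |\rho|^{p-2}|\nabla\rho|^2\,dx \leq 0$. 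Hence $\frac{d}{dt}\|\rho(t)\|_{L^p}^p \leq 0$ for $p < \infty$, and the case $p = \infty$ follows by letting $p\to\infty$ (or directly by the maximum principle for the convection-diffusion equation, valid uniformly in $\kappa\geq 0$). This gives $\|\rho(t)\|_{L^p} \leq \|\rho_0\|_{L^p}$.

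For part (2), I would take the $L^2(\mathbb{R}^3)$ scalar product of the velocity equation in \eqref{bsintro} with $v$. The pressure term drops because $\operatorname{div} v = 0$, the convection term $\int (v\cdot\nabla v)\cdot v\,dx$ vanishes for the same reason, and $-\int \Delta v\cdot v\,dx = \|\nabla v\|_{L^2}^2$. One is left with
\begin{equation*}
\frac12 \frac{d}{dt}\|v(t)\|_{L^2}^2 + \|\nabla v(t)\|_{L^2}^2 = \int_{\mathbb{R}^3} \rho\, e_z\cdot v\,dx \leq \|\rho(t)\|_{L^2}\,\|v(t)\|_{L^2} \leq \|\rho_0\|_{L^2}\,\|v(t)\|_{L^2},
\end{equation*}
where the last inequality uses part (1). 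Writing $y(t) = \|v(t)\|_{L^2}$, this gives $y'(t) \leq \|\rho_0\|_{L^2}$, hence $y(t) \leq \|v_0\|_{L^2} + t\,\|\rho_0\|_{L^2}$, so $\|v(t)\|_{L^2}^2 \leq 2\|v_0\|_{L^2}^2 + 2t^2\|\rho_0\|_{L^2}^2 \leq C_0(1+t^2)$. Then integrating the differential inequality in time and using this bound on $\|v\|_{L^2}$ yields $\int_0^t \|\nabla v(\tau)\|_{L^2}^2\,d\tau \leq \frac12\|v_0\|_{L^2}^2 + \|\rho_0\|_{L^2}\int_0^t \|v(\tau)\|_{L^2}\,d\tau \leq C_0(1+t^2)$, which is the claimed estimate.

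There is no serious obstacle here: both estimates are standard and the only point requiring a little care is to make sure all constants are independent of $\kappa \geq 0$, which is automatic since the $\kappa\Delta\rho$ term only helps (it has the good sign in the $L^p$ estimate and is absent from the velocity energy balance). The bookkeeping with Gronwall is elementary since the right-hand side forcing is controlled linearly in time.
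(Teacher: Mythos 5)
Your proposal is correct and follows essentially the same route as the paper: the $L^p$ maximum principle for the transport-diffusion equation (with the diffusion term having a favorable sign uniformly in $\kappa\geq 0$), followed by the $L^2$ energy identity for $v$, the reduction to $\frac{d}{dt}\|v\|_{L^2}\leq\|\rho_0\|_{L^2}$, and re-insertion of the resulting linear-in-time bound into the energy balance. No gaps.
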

Note that  the axisymmetric assumption is not needed in this proposition
\begin{proof}
The first estimate is classical for  convection diffusion  equations with a divergence free
 vector field. For  $p=\infty$ it is just   the maximum principle, while for finite $p$, it is
  a consequence of the  fact that 
  $$\big( \partial_{t }+ v \cdot \nabla  - \kappa \Delta \big) |\rho|^p \leq 0.$$
For the second one we take the $L^2(\mathbb{R}^3)$ scalar  product of  the velocity equation with $v$.
 From  integration by parts and the  fact that $v$ is divergence free, we get
\begin{equation}\label{eqs1}
\frac12\frac{d}{dt}\|v(t)\|_{L^2}^2+\|\nabla v(t)\|_{L^2}^2\le\|v(t)\|_{L^2}\|\rho(t)\|_{L^2}.
\end{equation}
 This yields 
$$
\frac{d}{dt}\|v(t)\|_{L^2}\le\|\rho(t)\|_{L^2}.
$$
 By  integration  in time, we find that 
$$
\|v(t)\|_{L^2}\le\|v_0\|_{L^2}+\int_0^t\|\rho(\tau)\|_{L^2}d\tau.
$$
Since  $\|\rho(t)\|_{L^2} \leq \|\rho_0\|_{L^2},$  we infer
$$
\|v(t)\|_{L^2}\le\|v_0\|_{L^2}+t\|\rho_0\|_{L^2}.
$$
Plugging this estimate into \eqref{eqs1} gives
$$
\frac12\|v(t)\|_{L^2}^2+\int_0^t\|\nabla v(\tau)\|_{L^2}^2d\tau
\le
\frac12\|v_0\|_{L^2}^2+\big(\|v_0\|_{L^2}+t\|\rho_0\|_{L^2}\big)\|\rho_0\|_{L^2} t.
$$
This gives the second claimed estimate and ends  the proof of  the proposition.
\end{proof}

We shall next prove the following result.
\begin{prop}\label{Strong}
Let $v_0\in H^1,$ with $\omega_0/r\in L^2$ and $\rho_0\in L^{2}\cap L^3$. Then any smooth solution $(v,\rho)$ of  \eqref{bsintro} with $\rho$ axisymmetric and $v$ axisymmetric without swirl satisfies:
\begin{enumerate}
\item for every $t\in\RR_+$
$$
\Big\| {\omega \over r} (t)\Big\|_{L^2}\le C_0 e^{C_0 t}, 
$$
\item for every $t\in\RR_+$
$$
\|v(t)\|_{H^1}^2+\int_0^t\|v(\tau)\|_{H^2}^2d\tau\le C_{0} e^{C_0 t},
$$
where $C_0$ depends only  on the norms of the initial data.
\end{enumerate}
\end{prop}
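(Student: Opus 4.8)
The plan is to estimate $\zeta = \omega_\theta/r$ via the diagonalization described in the introduction, and then to transfer the bound to $\omega/r$ and finally to $\|v\|_{H^1}\cap L^2_t H^2$ by the classical axisymmetric Navier-Stokes arguments. The case $\kappa = 0$ is the model one; I will describe it first and then indicate the modification for $\kappa \in (0,A]$.

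\textbf{Step 1: Diagonalization and the $\Gamma$-equation.} For $\kappa=0$, set $\Gamma = \zeta - \mathcal{L}\rho$, where $\mathcal{L} = (\Delta + \tfrac2r\partial_r)^{-1}\tfrac{\partial_r}{r}$ is the bounded operator on $L^p$, $p\in[2,\infty)$, furnished by Proposition \ref{prop1}. Using \eqref{mBouss}, the transport equation for $\rho$, and the fact that $\mathcal{L}$ commutes with the diffusion operator (the commutation $\mathcal{L}\Delta = (\Delta+\tfrac2r\partial_r)\mathcal{L}$ of Lemma \ref{lemLD}, together with the $\tfrac2r\partial_r$ part handled similarly), one checks that
\begin{equation*}
\partial_t\Gamma + v\cdot\nabla\Gamma - \Big(\Delta + \tfrac2r\partial_r\Big)\Gamma = -[\mathcal{L}, v\cdot\nabla]\rho .
\end{equation*}
The point of this transformation is that the dangerous forcing $-\partial_r\rho/r$ has been absorbed; what remains on the right is the commutator $[\mathcal{L},v\cdot\nabla]\rho = \mathcal{L}(v\cdot\nabla\rho) - v\cdot\nabla(\mathcal{L}\rho)$, which should morally be of lower order.

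\textbf{Step 2: $L^2$ estimate on $\Gamma$ and the commutator bound.} I take the $L^2$ scalar product of the $\Gamma$-equation with $\Gamma$. The operator $\Delta + \tfrac2r\partial_r$ is (minus) a nonnegative self-adjoint operator in the measure $r^2\,dr\,dz$ up to harmless terms; more concretely one uses that $\zeta$-type quantities satisfy $-\int (\Delta+\tfrac2r\partial_r)\Gamma\,\Gamma \gtrsim \|\nabla\Gamma\|_{L^2}^2$ after accounting for the axis. The convection term drops by $\mathrm{div}\,v=0$. So it remains to bound $\big|\int [\mathcal{L},v\cdot\nabla]\rho\,\Gamma\big|$. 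This is the heart of the matter. I expect to estimate it by something like $\|\rho\|_{L^3}\|\nabla v\|_{L^6}\|\Gamma\|_{L^2}$ or, better, by $C\|\rho\|_{L^3}\|v\|_{H^2}\|\Gamma\|_{L^2}$, using the $L^p$-boundedness of $\mathcal{L}$ (and of $(\Delta+\tfrac2r\partial_r)^{-1}\partial_z/\cdot r$ from Proposition \ref{prop11}), Sobolev embeddings in $\mathbb{R}^3$, and the uniform bound $\|\rho(t)\|_{L^p}\le\|\rho_0\|_{L^p}$ from Proposition \ref{Energy}. The subtlety is that a crude bound produces $\|v\|_{H^2}$, which is exactly the quantity to be controlled, so one must absorb it: after writing the full energy inequality for $\zeta$ (equivalently $\omega/r$) one has a term $\|\omega/r\|_{L^2_t\dot H^1}^2$ on the left which should dominate the $\|v\|_{H^2}$-factor, since for axisymmetric swirl-free fields $\|v\|_{H^2}$ is controlled by $\|v\|_{H^1} + \|\omega/r\|_{\dot H^1}$-type quantities; this is where I anticipate the real work, balancing the exponents carefully so that only an exponential-in-time, $\kappa$-independent bound survives after a Gronwall argument.

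\textbf{Step 3: From $\Gamma$ to $\omega/r$, then to $v$.} Since $\mathcal{L}\rho\in L^2$ with $\|\mathcal{L}\rho\|_{L^2}\le C\|\rho\|_{L^2}\le C\|\rho_0\|_{L^2}$ (Proposition \ref{prop1} with $p=2$), a bound $\|\Gamma(t)\|_{L^2}\le C_0 e^{C_0 t}$ immediately gives $\|\zeta(t)\|_{L^2} = \|\omega/r(t)\|_{L^2}\le C_0 e^{C_0 t}$, which is assertion (1). For assertion (2), I follow the classical Ladyzhenskaya–Ukhovskii scheme: the $H^1$ estimate on $v$ is equivalent (for axisymmetric swirl-free data) to an $L^2$ estimate on $\omega_\theta$; I take the $L^2$ inner product of \eqref{tourbillon} with $\omega_\theta$, use $\int \tfrac{v^r}{r}\omega_\theta^2 \le \|\omega/r\|_{L^2}\|\omega_\theta\|_{L^4}^2 \lesssim \|\omega/r\|_{L^2}\|\omega_\theta\|_{L^2}^{1/2}\|\nabla\omega_\theta\|_{L^2}^{3/2}$ (Gagliardo–Nirenberg in $\mathbb{R}^3$), absorb the $\dot H^1$-part into the dissipation by Young, and run Gronwall with the already-established exponential bound on $\|\omega/r\|_{L^2}$; controlling also the contribution of $\partial_r\rho/r$ requires once more the a priori bound on $\|\rho/r\|_{L^2}$, which is available because $\|\omega/r\|_{L^2}$ is now bounded (this is the point, noted after Theorem 1.1, that these two are tied together). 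Standard elliptic regularity then upgrades the $H^1 \cap L^2_t\dot H^2$-bound on $\omega_\theta$ to $\|v\|_{H^1}^2 + \int_0^t\|v\|_{H^2}^2 \le C_0 e^{C_0 t}$.

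\textbf{Step 4: The case $0<\kappa\le A$.} For $\kappa$ bounded away from $1$ one uses $\Gamma_\kappa = (1-\kappa)\zeta - \mathcal{L}\rho$; by Lemma \ref{lemLD} it satisfies the \emph{same} convection–diffusion equation as $\Gamma$ (now with $\kappa\Delta$ in place of part of the operator) up to the same commutator forcing, plus a smoothing gain from the $\kappa\Delta\rho$ term that only helps. Dividing by $1-\kappa$ recovers the estimate on $\zeta$. For $\kappa$ near $1$ one uses instead a transformation exploiting the genuine parabolic smoothing in the $\rho$-equation (as announced in the second Remark after Theorem \ref{thm1}); the resulting estimates are easier and again $\kappa$-uniform on bounded sets. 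In all cases the constants depend only on $\|v_0\|_{H^1}$, $\|\omega_0/r\|_{L^2}$, $\|\rho_0\|_{L^2\cap L^3}$ and not on $\kappa$.

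The step I expect to be the main obstacle is Step 2: making the commutator $[\mathcal{L}, v\cdot\nabla]\rho$ genuinely subcritical and then closing the estimate without the $\|v\|_{H^2}$-factor destroying the Gronwall argument. This is precisely where the $L^p$-theory of $\mathcal{L}$ (Propositions \ref{prop1} and \ref{prop11}) and the auxiliary operator with $\partial_z/r$ are used in an essential way, via the identity of Lemma \ref{leme1} which rewrites $\mathcal{L}\partial_r$ in terms of bounded operators.
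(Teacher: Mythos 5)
Your architecture matches the paper's: diagonalize with $\mathcal{L}$, run an $L^2$ energy estimate on $\Gamma$, transfer back to $\zeta$ via the $L^2$-boundedness of $\mathcal{L}$, then do the classical $\omega_\theta$ energy estimate. But the step you yourself flag as ``the heart of the matter'' --- the commutator estimate --- is left as a hope, and the fallback you sketch for it does not close. You propose to accept a bound of the form $\|\rho\|_{L^3}\|v\|_{H^2}\|\Gamma\|_{L^2}$ and to absorb the $\|v\|_{H^2}$ factor into the dissipation $\|\nabla(\omega/r)\|_{L^2_tL^2}^2$, on the grounds that $\|v\|_{H^2}\lesssim\|v\|_{H^1}+\|\omega/r\|_{\dot H^1}$. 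That inequality is false: $\|v\|_{\dot H^2}\sim\|\nabla\omega\|_{L^2}$, and $\nabla\omega_\theta=\nabla(r\zeta)$ carries an unbounded weight $r$ relative to $\nabla\zeta$, so the dissipation of the $\Gamma$-equation cannot dominate $\|v\|_{H^2}$. The paper avoids $\|v\|_{H^2}$ altogether: it writes $v\cdot\nabla\rho=\partial_r(v^r\rho)+\frac{v^r}{r}\rho+\partial_z(v^z\rho)$, applies Lemma \ref{leme1} to convert $\mathcal{L}\partial_r$ into bounded operators plus exact $\partial_z$-derivatives, integrates those $\partial_z$'s by parts onto $\Gamma$ (producing a factor $\|\nabla\Gamma\|_{L^2}$, absorbed by the dissipation of $\Gamma$ itself, times $\|v\|_{L^6}\|\rho\|_{L^3}\lesssim\|\nabla v\|_{L^2}\|\rho_0\|_{L^3}$ via Propositions \ref{prop1} and \ref{prop11}), and handles the remaining term $\int\frac{v^r}{r}\rho\,\Gamma$ with the weighted Biot--Savart bound $\|v^r/r\|_{L^6}\lesssim\|\omega_\theta/r\|_{L^2}\lesssim\|\Gamma\|_{L^2}+\|\rho_0\|_{L^2}$, which closes a Gronwall loop in $\|\Gamma\|_{L^2}$ alone. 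Without this specific mechanism your Step 2 does not go through.

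Two smaller points. In Step 3 you claim that controlling the density forcing in the $\omega_\theta$ equation requires a bound on $\|\rho/r\|_{L^2}$ and that such a bound ``is available because $\|\omega/r\|_{L^2}$ is now bounded.'' Neither is true: the forcing in \eqref{tourbillon} is $-\partial_r\rho$, not $-\partial_r\rho/r$, and the paper handles it by a single integration by parts using only $\|\rho\|_{L^2}\le\|\rho_0\|_{L^2}$; no estimate on $\rho/r$ is ever proved or needed (avoiding it is precisely the improvement over \cite{A-H-K0}). Also, your H\"older step $\int\frac{v^r}{r}\omega_\theta^2\le\|\omega/r\|_{L^2}\|\omega_\theta\|_{L^4}^2$ implicitly uses $\|v^r/r\|_{L^2}\lesssim\|\omega_\theta/r\|_{L^2}$, which is not an admissible endpoint of the Hardy--Littlewood--Sobolev inequality for the kernel $|x|^{-2}$ in $\RR^3$; the correct splitting is $\|v\|_{L^3}\|\omega_\theta/r\|_{L^2}\|\omega_\theta\|_{L^6}$.
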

Note that the axi-symmetry is crucial in this proposition. The estimates  are uniform for $\kappa \geq0$
 in a bounded  set.
\begin{proof}
 We shall  use the notation $\zeta= \omega_{\theta}/r$ where the vorticity $\omega$ is given
  by  $\omega= \omega_{\theta} e_{\theta}$ since the flow is axisymmetric. 
 The equation for $\zeta$ reads
\begin{equation}
\label{equation_i}
\big(\partial_t+v\cdot\nabla\big)\zeta-\big(\Delta+{{2 \over r}}\partial_r\big) \zeta =-\frac{\partial_r\rho}{r}\cdot
\end{equation}
 By using the operator $\mathcal{L}$,   this equation can be written as 
\begin{equation}
\label{eqm}
\big(\partial_t+v\cdot\nabla\big)\zeta -\big(\Delta+{2\over r}\partial_r\big)\big( \zeta -\mathcal{L}\rho\big)=0.
\end{equation}
Applying $\mathcal{L}$ to the equation  for  $\rho$ we first  get
$$
\partial_t\mathcal{L}\rho+v\cdot\nabla\mathcal{L}\rho - \kappa  \mathcal{L}  \Delta \rho=-[\mathcal{L}, v\cdot\nabla]\rho
$$
 and hence by using Lemma \ref{lemLD}, we find
 \begin{equation}
 \label{Lrho}
\partial_t\mathcal{L}\rho+v\cdot\nabla\mathcal{L}\rho - \kappa\big(  \Delta+ {2 \over r}\partial_{r}\big) \mathcal{L}\rho=-[\mathcal{L}, v\cdot\nabla]\rho.
 \end{equation}
 In view of  \eqref{eqm} and \eqref{Lrho}, we can set
  $$\Gamma:=  (1- \kappa )\zeta  -\mathcal{L}\rho.$$ 
   We find that $\Gamma$ solves the equation
$$
\big(\partial_t+v\cdot\nabla\big)\Gamma-\big(\Delta+{2 \over r}\partial_r\big)\Gamma=\textnormal{div }(v\,\mathcal{L}\rho)-\mathcal{L}(v\cdot\nabla\rho).
$$
Taking the $L^2(\mathbb{R}^3)$ inner product with $\Gamma$ and integrating by parts
 in the usual way  we get since $v$ is divergence free that 
\begin{eqnarray}
\label{idI}
\frac12\frac{d}{dt}\|\Gamma(t)\|_{L^2}+\|\nabla\Gamma(t)\|_{L^2}^2&\le& \|\nabla \Gamma(t)\|_{L^2}\|v\mathcal{L}\rho\|_{L^2}\\
\nonumber &-&\int_{\RR^3}\mathcal{L}(v\cdot\nabla\rho)\Gamma dx\\
\nonumber&=&\hbox{I}+\hbox{II}.
\end{eqnarray}
To estimate the first term we use successively  the  H\"older inequality,  Proposition \ref{prop1} and
 the first estimate of Proposition \ref{Energy} to get 
\begin{eqnarray*}
\|v\,\mathcal{L}\rho\|_{L^2}&\le&\|v\|_{L^6}\|\mathcal{L}\rho\|_{L^3}
\lesssim\|v\|_{L^6}\|\rho\|_{L^3} 
 \lesssim   \|v\|_{L^6}\|\rho_{0}\|_{L^3}
\end{eqnarray*}
Now from  the Young inequality, we  get 
\begin{equation}
\label{estI}
\textnormal{I}\le\frac14\|\nabla\Gamma\|_{L^2}^2+C\|v\|_{L^6}^2\|\rho_0\|_{L^3}^2.
\end{equation}
Towards the estimate of the  second term in the right-hand side of \eqref{idI},  we  can use 
 since $v$ is divergence free that 
$$
v\cdot\nabla\rho=\partial_r(v^r\rho)+{v^r \over r}\rho+\partial_z(v^z\rho).
$$
Thus we obtain
\begin{eqnarray*}
\mathcal{L}(v\cdot\nabla\rho)&=&\mathcal{L}\partial_{r}(v^r\rho)+\mathcal{L}({v^r\over r}\rho)\\
&+&\partial_z\mathcal{L}(v^z\rho).
\end{eqnarray*}
 Next, we can use Lemma \ref{leme1} to get 
\begin{eqnarray*}
\mathcal{L}(v\cdot\nabla\rho)&=&{v^r \over r} \rho-\partial_z\big(\Delta+{2\over r} \partial_{r }\big)^{-1}(\frac1r\partial_{z}(v^r\rho))
\\
&+&\partial_z\mathcal{L}(v^z\rho).
\end{eqnarray*}
Thus we find that 
\begin{eqnarray*}
\hbox{II}&=&-\int {v^r \over r} \rho\,\Gamma dx+\int\big(\Delta+{2 \over r } \partial_{r }\big)^{-1}(\frac1r\partial_{z}(v^r\rho))\partial_z\Gamma dx
\\
&-&\int\mathcal{L}(v^z\rho)\partial_z\Gamma dx\\
&=&J_1+J_2+J_3.
\end{eqnarray*}
 From Cauchy-Schwarz inequality, we first obtain that
 $$ |J_{2}+ J_{3}| \leq  \|\partial_z\Gamma\|_{L^2}\Big(\Big\| (\Delta + {2 \over r } \partial_{r}\big)^{-1}
  \big( { \partial_{z} \over r } \big)(v^r\rho)\Big\|_{L^2} +\| \mathcal{L}(v^z\rho)\|_{L^2} \Big)
   \leq  \|\nabla\Gamma\|_{L^2} \big(\|v^r\rho\|_{L^2} +\|v^z\rho\|_{L^2} \big)$$
    where the last estimate comes from Proposition \ref{prop11} and Proposition \ref{prop1}.
    
     Next, 
     by 
using   successively the H\"older inequality,    the Sobolev inequality
\begin{equation}
\label{sob3}
 \| f \|_{L^6} \lesssim \| \nabla f \|_{L^2}
 \end{equation}
 in dimension $3$, 
    and the first estimate of Proposition \ref{Energy},  we infer
\begin{eqnarray*}
|J_2+J_3|
&\le&\|\nabla\Gamma\|_{L^2}\|v\|_{L^6}\|\rho\|_{L^3}\\
&
\le& \|\nabla\Gamma\|_{L^2}\|\nabla v\|_{L^2}\|\rho_0\|_{L^3}.
\end{eqnarray*}
 In a similar way, we estimate $J_{1}$:
\begin{eqnarray*}
|J_1|&\le&\|\Gamma\rho\|_{L^{\frac65}}\|v^r/r\|_{L^6}\\
&\le& 
\|\Gamma\|_{L^2}\|\rho\|_{L^{3}}\|v^r/r\|_{L^6}\\
&\le&
\| \Gamma\|_{L^2}\|\rho_0\|_{L^{3}}\|v^r/r\|_{L^6}.
\end{eqnarray*}
 To estimate $\|v^r/r\|_{L^6}$, we can   use the    inequality 
$$
|v^r/r|\lesssim \frac{1}{|\cdot|^2}\star(|\omega_\theta/r|)
$$
which is very useful  in the field of axisymmetric solution  of  incompressible fluid mechanics equations, 
 we refer to  \cite{Taira,rd}  for the proof. Next, 
since $\frac{1}{|x|^2}\in L^{\frac32,\infty}(\mathbb{R}^3)$, we get
 from the  classical Hardy-Littlewood-Sobolev inequality that \begin{equation}
 \label{hls}
\|v^r/r\|_{L^6}\lesssim \|\omega_\theta/r\|_{L^2} = \|\zeta \|_{L^2}.\end{equation}
 From the definition of $\Gamma$ we have that 
 \begin{equation}
 \label{mGamma}
  \|\zeta \|_{L^2} \leq |\kappa-1|^{-1}\Big(\| \Gamma \|_{L^2} + \| \mathcal{L} \rho \|_{L^2}\Big).\end{equation}
  Note that this estimate is uniform for $ \kappa \geq 0$ and far from $\kappa=1$. We will see later how to obtain uniform estimates around the value $\kappa=1.$
   Therefore, by using again Proposition \ref{prop1} and the first estimate of Proposition \ref{Energy}, we
    obtain 
    \begin{eqnarray*}
\|v^r/r\|_{L^6}     
\lesssim  \|\Gamma\|_{L^2}+\|\mathcal{L}\rho\|_{L^2}
\lesssim \|\Gamma\|_{L^2}+\|\rho\|_{L^2}
\lesssim \|\Gamma\|_{L^2}+\|\rho_0\|_{L^2}.
\end{eqnarray*}
Consequently
$$
|J_1|\lesssim\|\Gamma\|_{L^2}\|\rho_0\|_{L^{3}}\big(\|\Gamma\|_{L^2}+\|\rho_0\|_{L^2}\big).
$$
Combining theses estimates with the  Young inequality we get
\begin{eqnarray}
\nonumber |\hbox{II}|&\lesssim&\|\nabla\Gamma\|_{L^2}\|\nabla v\|_{L^2}\|\rho_0\|_{L^{3}}+\|\Gamma\|_{L^2}\|\rho_0\|_{L^{3}}\big(\|\Gamma\|_{L^2}+\|\rho_0\|_{L^2}\big)\\
\label{estII}&\le&\frac14\|\nabla\Gamma\|_{L^2}^2+C\|\nabla v\|_{L^2}^2\|\rho_0\|_{L^3}^2+C\|\rho_0\|_{L^{3}}\|\Gamma\|_{L^2}^2+C\|\Gamma\|_{L^2}\|\rho_0\|_{L^3}\|\rho_0\|_{L^2}.
\end{eqnarray}
It follows from  \eqref{idI}, \eqref{estI} and \eqref{estII} that
\begin{eqnarray*}
\frac{d}{dt}\|\Gamma(t)\|_{L^2}^2+\|\nabla\Gamma(t)\|_{L^2}^2&\lesssim&\|\nabla v\|_{L^2}^2\|\rho_0\|_{L^3}^2 +\|\rho_0\|_{L^{3}}\|\Gamma\|_{L^2}^2+\|\Gamma\|_{L^2}\|\rho_0\|_{L^3}\|\rho_0\|_{L^2}\\
&\lesssim&\|\rho_0\|_{L^{3}}\|\Gamma\|_{L^2}^2+\|\nabla v\|_{L^2}^2\|\rho_0\|_{L^3}^2+\|\rho_0\|_{L^3}\|\rho_0\|_{L^2}^2.
\end{eqnarray*}
 We can then integrate  in time and use the energy inequality  of Proposition \ref{Energy}-(2)  to get 
\begin{eqnarray*}
\|\Gamma(t)\|_{L^2}^2+\int_0^t\|\nabla\Gamma(\tau)\|_{L^2}^2d\tau&\lesssim&\|\rho_0\|_{L^3}^2\|\nabla v\|_{L^2_tL^2}^2+\|\rho_0\|_{L^{3}}\|\rho_0\|_{L^2}^2 t\\
&+&\|\rho_0\|_{L^{3}}\int_0^t\|\Gamma(\tau)\|_{L^2}^2d\tau\\
&\le&C_0(1+t^2)+C_{0}\int_0^t\|\Gamma(\tau)\|_{L^2}^2d\tau.
\end{eqnarray*}
 By using  the Gronwall inequality we find
$$
\|\Gamma(t)\|_{L^2}^2+\int_0^t\|\nabla\Gamma(\tau)\|_{L^2}^2d\tau\le C_0e^{C_0 t}.
$$
It follows from a new use of \eqref{mGamma} that 
\begin{equation}\label{eq00}
\|\zeta(t)\|_{L^2}\le C_0 e^{C_0 t}.
\end{equation}
This proves (1) in Proposition \ref{Strong}.

To prove (2), we can now perform an energy estimate on the equation \eqref{tourbillon}
 satisfied by $\omega_{\theta}$. 
 By taking  the $L^2(\mathbb{R}^3)$ scalar product of \eqref{tourbillon} with $\omega_\theta$  
  we get 
   \begin{eqnarray*}
\frac12\frac{d}{dt}\|\omega_\theta(t)\|_{L^2}^2+\|\nabla\omega_\theta\|_{L^2}^2+\|\omega_\theta/r\|_{L^2}^2&\le&\int_{\RR^3}v^r(\omega_\theta/r)\omega_\theta dx-\int_{\RR^3}\partial_r\rho\omega_\theta dx.
\end{eqnarray*}
 Thanks to an integration by parts, we have that 
$$ \Big|\int_{\RR^3}\partial_r\rho\omega_\theta dx \Big|  \leq  \|\rho \|_{L^2}\, \big( \|\nabla \omega_{\theta} \|_{L^2}+
 \| \omega_{\theta}/r \|_{L^2}\big)
$$
 and hence,  by using the Holder inequality and the Sobolev inequality \eqref{sob3}, we find
 \begin{eqnarray*}
& & \frac12\frac{d}{dt}\|\omega_\theta(t)\|_{L^2}^2+\|\nabla\omega_\theta\|_{L^2}^2+\|\omega_\theta/r\|_{L^2}^2 \\
&\le&\|v\|_{L^3}\|\omega_\theta/r\|_{L^2}\|\omega_\theta\|_{L^6}+\|\rho\|_{L^2}\big(\|\nabla\omega_\theta\|_{L^2}
 + \|\omega_{\theta}/r \|_{L^2}\big)\\
&\lesssim&\|v\|_{L^3}\|\omega_\theta/r\|_{L^2}  \|\nabla \omega_{\theta}\|_{L^2}+\|\rho\|_{L^2} \big(\|\nabla \omega_\theta\|_{L^2}
 + \|\omega_{\theta}/r \|_{L^2}\big).
\end{eqnarray*}
Thus  from  \eqref{eq00},  the first estimate of Proposition \ref{Energy} and  the Young 
inequality, we obtain
\begin{equation}
\label{fin1}
\frac{d}{dt}\|\omega_\theta(t)\|_{L^2}^2+ {1 \over 2} \Big( \|\nabla\omega_\theta\|_{L^2}^2+\|\omega_\theta/r\|_{L^2}^2 \Big) \lesssim\|\rho_0\|_{L^2}^2+C_0 e^{C_0 t}\|v(t)\|_{L^3}^2.
\end{equation}
By interpolation, the  Sobolev embedding \eqref{sob3} and  the second estimate of Proposition
 \ref{Energy}  we  have 
\begin{eqnarray*}
\int_{0}^t \|v\|_{L^3}^2&\le& \int_{0}^t \|v\|_{L^2}\|v\|_{L^6}
\lesssim \int_{0}^t  \|v\|_{L^2}\|\nabla v\|_{L^2}
\leq C_0(1+t^2).
\end{eqnarray*}
Therefore we get by integrating \eqref{fin1} in time that 
$$
\|\omega_\theta(t)\|_{L^2}^2+\int_0^t\big(\|\nabla\omega_\theta(\tau)\|_{L^2}^2+\|\frac{\omega_\theta}{r}(\tau)\|_{L^2}^2\big)d\tau \le e^{C_0 t}.
$$
 Since we have $\| \omega \|_{L^2}= \| \omega_{\theta} \|_{L^2}$ and 
$$\|\nabla\omega\|_{L^2}^2=\|\nabla\omega_\theta\|_{L^2}^2+\|\omega_\theta/r\|_{L^2}^2, 
$$
 we finally obtain that  
$$
\|\omega(t)\|_{L^2}^2+\int_0^t\|\nabla\omega(\tau)\|_{L^2}^2d\tau \le e^{C_0 t}.
$$
This ends the proof of Proposition \ref{Strong} where $\kappa$ belongs to a compact set that does not contain $1$. Let us now see how to get uniform bounds around $\kappa=1$ which is more easy and does not require the use of the operator $\mathcal{L}.$  We write the equation of the density under the form
$$
\partial_t\rho+v\cdot\nabla\rho-(\Delta+\frac{2}{r}\partial_r)\rho=(1-\kappa)\Delta\rho-\frac{2}{r}\partial_r\rho.
$$
We set $\Gamma_1=\zeta-\frac\rho2.$ Then combining this equation with  \eqref{equation_i} we find
$$
\partial_t\Gamma_1+v\cdot\nabla\Gamma_1-(\Delta+\frac{2}{r}\partial_r)\Gamma_1=\frac{\kappa-1}{2}\Delta\rho.
$$
Taking the $L^2$ scalar product of this equation with $\Gamma_1$  and integrating by parts we get
\begin{eqnarray*}
\frac12\frac{d}{dt}\|\Gamma_1(t)\|_{L^2}^2+\|\nabla\Gamma_1(t)\|_{L^2}^2&\le& \frac{k-1}{2}\|\nabla\rho\|_{L^2}\|\nabla\Gamma_1\|_{L^2}\\
&\le&\frac12(\frac{\kappa-1}{2})^2\|\nabla\rho\|_{L^2}^2+\frac12\|\nabla\Gamma_1\|_{L^2}^2.
\end{eqnarray*}
Integrating in time yields
$$
\|\Gamma_1(t)\|_{L^2}^2+\|\nabla\Gamma_1\|_{L^2_tL^2}^2\le C{(\kappa-1)^2}\|\nabla\rho\|_{L^2_tL^2}^2.
$$
Combining this estimate with the energy estimate
$$
\kappa^{\frac12}\|\nabla\rho\|_{L^2_tL^2}\le\|\rho_0\|_{L^2}
$$
gives
$$
\|\Gamma_1(t)\|_{L^2}^2+\|\nabla\Gamma_1\|_{L^2_tL^2}^2\le C\frac{(\kappa-1)^2}{\kappa^{\frac12}}\|\rho_0\|_{L^2}^2.
$$
This gives the desired result.
\end{proof}
The following proposition gives some  more precise information than stated in  Theorem \ref{thm1} about the solution. This will be useful to prove the  uniqueness result.
\begin{prop}\label{Lipschitz}
Let $v_0\in H^1$ be a divergence free  axisymmetric without swirl vector field such that  $\omega_0/r\in L^2$ and $\rho_0\in L^{2}\cap L^m, m>3$ or $\rho_0\in L^{2}\cap B_{3,1}^0$ an axisymmetric  function. Then any smooth  solution  $(v, \rho)$ of the system \eqref{bsintro}
   satisfies for every $p\in]3,\infty]$
$$
\|v\|_{L^1_tB_{p,1}^{1+\frac3p}}+\|\nabla v\|_{L^1_tL^\infty}\le C_0e^{C_0 t}.
$$
The estimate is uniform with respect to  $\kappa$ lying in a bounded set.
\end{prop}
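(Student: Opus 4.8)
The plan is to deduce the estimate from the single piece of nonlinear information already available, namely the bound $\|v\|_{L^\infty_tH^1}^2+\|v\|_{L^2_tH^2}^2\le C_0e^{C_0t}$ furnished by Proposition~\ref{Strong} (this is where axisymmetry is used, and it is uniform for $\kappa$ in a bounded set), by combining it with the smoothing of the heat semigroup acting on the velocity equation. Concretely, I would start from the Duhamel formula
$$v(t)=e^{t\Delta}v_0-\int_0^te^{(t-s)\Delta}\,\mathbb{P}\big(v\cdot\nabla v-\rho e_z\big)(s)\,ds,$$
where $\mathbb{P}$ is the Leray projector, and use the standard frequency-localized smoothing bound $\|e^{\tau\Delta}\Delta_qf\|_{L^p}\lesssim e^{-c\tau2^{2q}}\|\Delta_qf\|_{L^p}$ together with $\int_0^\infty 2^{2q}e^{-c\tau2^{2q}}\,d\tau\lesssim1$. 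Fixing $p\in\,]3,\infty[$, writing $v\cdot\nabla v=\textnormal{div}(v\otimes v)$, and using the boundedness of $\mathbb{P}$ and of $\textnormal{div}$ on the relevant Besov spaces, this gives
$$\|v\|_{L^1_tB^{1+\frac3p}_{p,1}}\lesssim \|v_0\|_{B^{\frac3p-1}_{p,1}}+\|v\otimes v\|_{L^1_tB^{\frac3p}_{p,1}}+\int_0^t\|\rho(\tau)\|_{B^{\frac3p-1}_{p,1}}\,d\tau.$$

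The first two terms follow directly from Proposition~\ref{Strong}. Indeed $v_0\in H^1\hookrightarrow B^{\frac3p-1}_{p,1}$ by the Besov embeddings recalled in \eqref{besemb} (one may afford to upgrade the summability index since the target regularity drops strictly); and since $B^{\frac3p}_{p,1}$ is an algebra for $p>3$ while $H^2=B^2_{2,2}\hookrightarrow B^{\frac12+\frac3p}_{p,2}\hookrightarrow B^{\frac3p}_{p,1}$, one has
$$\|v\otimes v\|_{L^1_tB^{\frac3p}_{p,1}}\lesssim\int_0^t\|v(\tau)\|_{B^{\frac3p}_{p,1}}^2\,d\tau\lesssim\|v\|_{L^2_tH^2}^2\le C_0e^{C_0t}.$$
Thus everything is reduced to bounding $\int_0^t\|\rho(\tau)\|_{B^{\frac3p-1}_{p,1}}\,d\tau$, which is where the hypothesis on $\rho_0$ is used.

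If $\rho_0\in L^2\cap L^m$ with $m>3$, then Proposition~\ref{Energy} gives $\|\rho(t)\|_{L^2\cap L^m}\le\|\rho_0\|_{L^2\cap L^m}$ for every $\kappa\ge0$, and for $p>3$ one has $L^2\cap L^m\hookrightarrow B^{\frac3p-1}_{p,1}$ — interpolating $L^p$ from $L^2$ and $L^m$ when $p\le m$, or invoking the Besov embedding when $p\ge m$, and in either case passing from $\ell^2$ to $\ell^1$ because $\frac3p-1<0$. Hence the buoyancy term is $\le C_0t$ and the estimate closes with no feedback between the two equations. If instead $\rho_0\in L^2\cap B^0_{3,1}$, I would use $B^0_{3,1}\hookrightarrow B^{\frac3p-1}_{p,1}$ for $p\ge3$ and propagate the $B^0_{3,1}$ norm of $\rho$ with the logarithmic estimate of Proposition~\ref{log},
$$\|\rho(\tau)\|_{B^0_{3,1}}\le C\|\rho_0\|_{B^0_{3,1}}\Big(1+\int_0^\tau\|\nabla v(\sigma)\|_{L^\infty}\,d\sigma\Big),$$
and then close by Gronwall: since $B^{1+\frac3p}_{p,1}\hookrightarrow B^1_{\infty,1}\hookrightarrow W^{1,\infty}$ gives $\|\nabla v(\sigma)\|_{L^\infty}\lesssim\|v(\sigma)\|_{B^{1+\frac3p}_{p,1}}$, setting $\Psi(t):=\|v\|_{L^1_tB^{1+\frac3p}_{p,1}}$ (finite because the solution is smooth) the bounds above yield $\Psi(t)\le C_0e^{C_0t}+C_0\int_0^t\big(1+\Psi(s)\big)\,ds$, whence $\Psi(t)\le C_0e^{C_0t}$. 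In both cases $\|\nabla v\|_{L^1_tL^\infty}\lesssim\Psi(t)\le C_0e^{C_0t}$, the endpoint $p=\infty$ needing only the obvious modification (estimating $v\otimes v$ in $L^1_tB^{\frac12}_{\infty,1}$); uniformity in $\kappa$ is inherited from Propositions~\ref{Energy},~\ref{Strong} and~\ref{log}.

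The delicate point is precisely the coupling in the second case: the transport--diffusion equation for $\rho$ only controls $\|\rho(t)\|_{B^0_{3,1}}$ at the cost of a factor involving $\int_0^t\|\nabla v\|_{L^\infty}$, while the velocity equation needs $\rho$ in a Besov space of negative regularity. This is why $L^3$ by itself is not enough (it does not embed into $B^{\frac3p-1}_{p,1}$, the embedding being borderline) and why the assumption is the slightly finer $\rho_0\in B^0_{3,1}$; and it is essential here, as stressed after Proposition~\ref{log}, that the amplification in the logarithmic estimate is only \emph{linear} in $\|\nabla v\|_{L^\infty}$ --- an exponential amplification would lead to a nonlinear integral inequality that could blow up in finite time.
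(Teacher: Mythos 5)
Your proof is correct and follows essentially the same route as the paper's: Duhamel's formula with the frequency-localized heat smoothing and the boundedness of $\Delta_q\mathcal{P}$, the quadratic term controlled through the $L^2_tH^2$ bound of Proposition \ref{Strong} (you use the algebra $B^{3/p}_{p,1}$ directly where the paper passes through $B^{3/2}_{2,1}$, an immaterial variant), and the same two-case treatment of the buoyancy term, with Proposition \ref{log} plus Gronwall in the $B^0_{3,1}$ case. Your closing remarks on why $L^3$ alone is borderline and why the linearity of the logarithmic estimate matters accurately reflect the structure of the argument.
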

\begin{proof}
We first prove the result in the  case of   $\rho_0\in L^{2}\cap B_{3,1}^0.$ 
Let $q\in\NN$ and set $v_q:=\Delta_q v.$ Then applying the operator $\Delta_q$
 from the Littlewood-Paley decomposition to the velocity equation and using Duhamel formula we get
\begin{equation}
\label{fin2}
v_q(t)=e^{t\Delta}v_q(0)+\int_0^te^{(t-\tau)\Delta}\Delta_q\mathcal{P}(v\cdot\nabla v)(\tau,x)d\tau+\int_0^te^{(t-\tau)\Delta}\Delta_q\mathcal{P}(\rho e_z)(\tau,x)d\tau,
\end{equation}
where $\mathcal{P}$ is the Leray projection on divergence free vector fields.
Now we will use  two estimates: the first one is proved in \cite{che1}
$$
\|e^{t\Delta}\Delta_q f\|_{L^p}\le Ce^{-ct2^{2q}}\|\Delta_q f\|_{L^p}.
$$
The second estimate is
$$
\|\Delta_q\mathcal{P}f\|_{L^p}\le C\|\Delta_q f\|_{L^p}.
$$
This last estimate is a consequence of the fact that $\Delta_q\mathcal{P}=\psi(2^{-q}\textnormal{D})$ with $\psi\in \mathcal{D}(\RR^3)$.

Therefore, we get from \eqref{fin2} that 
$$
\|v_q\|_{L^1_tL^p}\lesssim 2^{-2q}\|v_q(0)\|_{L^p}+2^{-2q}\int_0^t\|\Delta_q(v\cdot\nabla v)(\tau)\|_{L^p}d\tau+2^{-2q}\|\Delta_q\rho\|_{L^1_t L^p}.
$$
It follows from the above inequality,  the Besov embeddings \eqref{besemb} and Proposition \ref{Energy} that 
\begin{eqnarray*}
\|v\|_{L^1_tB_{p,1}^{1+\frac3p}}&\le&\|\Delta_{-1}v\|_{L^1_tL^p}+ \|v_0\|_{B_{p,1}^{-1+\frac3p}}+\|v\cdot\nabla v\|_{L^1_tB_{p,1}^{-1+\frac3p}}+\|\rho\|_{L^1_tB_{p,1}^{-1+\frac3p}}\\
&\le& Ct\|v\|_{L^\infty_t L^2}+\|v_0\|_{H^1}+\|v\otimes v\|_{L^1_tB_{2,1}^{\frac32}}+\|\rho\|_{L^1_tB_{3,1}^{0}}\\
&\le& C_0(1+t^2)+\|v\otimes v\|_{L^1_tB_{2,1}^{\frac32}}+\|\rho\|_{L^1_tB_{3,1}^{0}}.
\end{eqnarray*}
Since $B_{2,1}^{\frac32}$ is an algebra, we have
$$
\|v\otimes v\|_{L^1_tB_{2,1}^{\frac32}}\le C\|v\|_{L^2_tB_{2,1}^{\frac32}}^2.
$$Moreover,  the embedding $H^{2}\hookrightarrow B_{2,1}^{\frac32}$ combined with
 the second estimate of   Proposition \ref{Strong} gives
$$
\|v(t)\|_{L^2_tB_{2,1}^{\frac32}}\le C_0e^{C_0t}.
$$
Consequently, we obtain
\begin{equation}\label{line}
\|v\|_{L^1_tB_{p,1}^{1+\frac3p}}\le C_0 e^{C_0 t}+\|\rho\|_{L^1_tB_{3,1}^{0}}.
\end{equation}
It remains to estimate the norm of the  density. For this purpose,  we first  use  the logarithmic estimate described in Proposition \ref{log}  and \eqref{besemb} to get 
\begin{eqnarray*}
\|\rho(t)\|_{B_{3,1}^0}&\le&C\|\rho_0\|_{B_{3,1}^0}\Big(1+\int_0^t\|v(\tau)\|_{B_{\infty,1}^1}d\tau\Big)\\
&\le&C\|\rho_0\|_{B_{3,1}^0}\Big(1+\int_0^t\|v(\tau)\|_{B_{p,1}^{1+\frac3p}}d\tau\Big).
\end{eqnarray*}
Set $V(t):=\|v\|_{L^1_tB_{p,1}^{\frac3p}}$, then combining this estimate with \eqref{line} yields
\begin{eqnarray*}
V(t)&\le& C_0 e^{C_0 t}+\|\rho_0\|_{B_{3,1}^0}\int_0^tV(\tau)d\tau.
\end{eqnarray*}
We conclude now by Gronwall lemma. 

Let us now come back to the case that  $\rho_0\in L^{2}\cap L^m,$ with $m>3$ which is more easy than the previous case.  
 The same proof as above  for the velocity yields 
$$ \|v\|_{L^1_tB_{p,1}^{1+\frac3p}}\le C_0 e^{C_0 t}+\|\rho\|_{L^1_tB_{p,1}^{ -{1 + {3 \over p } } } }$$
 and it still remains to estimate the density.  
 Let us  set $m_1:=\min(m,p)>3$ then by Besov embeddings  and the first estimate of
  Proposition \ref{Energy}, we get
\begin{eqnarray*}
\|\rho\|_{L^1_t B_{p,1}^{-1+\frac3p}}\lesssim \|\rho\|_{L^1_t B_{m_1,1}^{-1+\frac3{m_1}}}
\lesssim  \|\rho\|_{L^1_t  L^{m_1} } \lesssim  t \|\rho_{0} \|_{L^{m_{1}}}
\lesssim  t\|\rho_0\|_{L^{2}\cap L^m}.
\end{eqnarray*}
Note that the last estimate holds by interpolation.
 This ends the proof.
\end{proof}
\section{Proof of the main result}
\label{sectionproof}
For the existence part of Theorem
\ref{thm1} we smooth out the initial data as follows
$$
v_{0,n}=S_n v_0, \rho_{0,n}=S_n \rho_0,
$$
where $S_n$ is the cut-off in frequency defined in section 2. 
We  start with  the following stability results.
\begin{lem}\label{lem1}
Let $v_0$ be a free divergence axisymmetric vector-field without swirl and $\rho_0$ an axisymmetric scalar function. Then
\begin{enumerate}
\item for every $n\in\NN$, $v_{0,n}$ and $\rho_{0,n}$ are axisymmetric and $\textnormal{div }v_{0,n}=0.$

\item
If $v_0\in H^1$  is  such that
$({\textnormal{curl }v_0})/{r}\in L^2$
 and $\rho_{0}\in L^2 \cap B_{3, 1}^0$. Then  there exists a constant $C$ independent of $n$ such that
$$
\|v_{0,n}\|_{H^1}\le \|v_0\|_{H^1},\quad
\big\| (\textnormal{curl }v_{0,n})/r \big\|_{L^2}\le C\big\| {(\textnormal{curl }v_0)}/{r} \big\|_{L^2}, 
 $$
 $$  \|\rho_{0, n}\|_{L^2} \leq \|\rho_{0}\|_{L^2}, \quad
  \|\rho_{0, n} \|_{B_{3, 1}^0} \leq C \|\rho_{0} \|_{B_{3, 1}^0}.
$$

\end{enumerate}
\end{lem}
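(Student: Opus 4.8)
The plan is to use systematically that $S_n=\chi(2^{-n}\textnormal{D})$ is a Fourier multiplier with a \emph{radial} symbol; recall that $S_n=\sum_{-1\le j\le n-1}\Delta_j$ has symbol $\chi(2^{-n}\xi)$, obtained by telescoping the partition of unity, and that $\chi^2\le 1$ pointwise.

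For (1) I would argue that, having a radial symbol, $S_n$ commutes with the action of $O(3)$ on scalar functions, hence with the induced action on vector fields (the latter being $O(3)$ acting by a change of variables together with a constant linear map on the Cartesian components); in particular $S_n$ commutes with the rotations about $(Oz)$ and with the orthogonal symmetries with respect to the planes containing $(Oz)$. Since axisymmetric scalar functions are exactly those invariant under the rotations about $(Oz)$, and axisymmetric vector fields without swirl are exactly those invariant under these rotations and these symmetries, it follows that $\rho_{0,n}=S_n\rho_0$ is axisymmetric and $v_{0,n}=S_nv_0$ is axisymmetric without swirl; and $\textnormal{div}\,v_{0,n}=S_n(\textnormal{div}\,v_0)=0$ since $S_n$ commutes with the constant coefficient operator $\textnormal{div}$. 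For the first three bounds in (2), using $\widehat{S_nf}(\xi)=\chi(2^{-n}\xi)\widehat f(\xi)$ with $\chi^2\le 1$, Plancherel gives at once $\|v_{0,n}\|_{H^1}\le\|v_0\|_{H^1}$ and $\|\rho_{0,n}\|_{L^2}\le\|\rho_0\|_{L^2}$, while the bound $\|\rho_{0,n}\|_{B_{3,1}^0}\le C\|\rho_0\|_{B_{3,1}^0}$ is a routine Littlewood--Paley fact: using \eqref{dr2} one has $\Delta_qS_n\rho_0=\Delta_q\rho_0$ for $q\le n-2$, $\Delta_qS_n\rho_0=0$ for $q$ large enough, and for the finitely many remaining indices $\|\Delta_qS_n\rho_0\|_{L^3}\le C(\|\Delta_{q-1}\rho_0\|_{L^3}+\|\Delta_q\rho_0\|_{L^3}+\|\Delta_{q+1}\rho_0\|_{L^3})$ because $\Delta_qS_n$ has a convolution kernel whose $L^1$ norm is bounded uniformly in $n,q$; summing over $q$ concludes.

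The real work — and where I expect the main obstacle, the $1/r$ singularity on the axis, to lie — is the estimate of $\|\textnormal{curl}(v_{0,n})/r\|_{L^2}$. I would set $\omega_0=\textnormal{curl}\,v_0=\omega_{0,\theta}e_\theta$ and $\zeta_0=\omega_{0,\theta}/r\in L^2$ (both axisymmetric), so that in Cartesian coordinates $\omega_0=\zeta_0\,(-x_2,x_1,0)$ and $\|\textnormal{curl}(v_0)/r\|_{L^2}=\|\zeta_0\|_{L^2}$. Since $S_n$ commutes with $\textnormal{curl}$, $\textnormal{curl}\,v_{0,n}=S_n\omega_0$, which by (1) is again axisymmetric with only an $e_\theta$-component, so writing $\zeta_{0,n}$ for the scalar with $\textnormal{curl}\,v_{0,n}=(r\zeta_{0,n})e_\theta$ one gets $\zeta_{0,n}=r^{-2}\big(x_1(S_n\omega_0)^2-x_2(S_n\omega_0)^1\big)$ and $\|\textnormal{curl}(v_{0,n})/r\|_{L^2}=\|\zeta_{0,n}\|_{L^2}$. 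The key algebraic input will be the commutator identity, obtained by a direct Fourier computation,
$$[S_n,x_j]=-\,2^{-2n}\,\partial_j\,\tilde\phi(2^{-n}\textnormal{D}),\qquad \tilde\phi(\xi):=\frac{(\partial_j\chi)(\xi)}{\xi_j}=\frac{\chi_0'(|\xi|)}{|\xi|}\quad(\chi(\xi)=\chi_0(|\xi|)),$$
the point being that $\tilde\phi$ is independent of $j$, radial, smooth and compactly supported. Inserting $(S_n\omega_0)^1=-x_2S_n\zeta_0+2^{-2n}\partial_2g_n$ and $(S_n\omega_0)^2=x_1S_n\zeta_0-2^{-2n}\partial_1g_n$, with $g_n:=\tilde\phi(2^{-n}\textnormal{D})\zeta_0$ (axisymmetric), into the formula for $\zeta_{0,n}$, the $S_n\zeta_0$ terms recombine through $(x_1^2+x_2^2)/r^2=1$ and the commutator terms through $x_h\cdot\nabla_hg_n=r\partial_rg_n$, which I expect to give
$$\zeta_{0,n}=S_n\zeta_0-2^{-2n}\,\frac{\partial_rg_n}{r}.$$

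To finish, $\|S_n\zeta_0\|_{L^2}\le\|\zeta_0\|_{L^2}$, and for the remainder I would reuse the pointwise identity already used in the proof of Proposition \ref{prop1} (valid since $g_n$ is axisymmetric), $\partial_rg_n/r=(x_2^2/r^2)\partial_1^2g_n+(x_1^2/r^2)\partial_2^2g_n-2(x_1x_2/r^2)\partial_{12}^2g_n$, to get $\|\partial_rg_n/r\|_{L^2}\lesssim\|\nabla^2g_n\|_{L^2}$, and then Bernstein's inequality (Lemma \ref{lb}), $g_n$ being spectrally localized in a ball of radius $\lesssim 2^n$, to bound $\|\nabla^2g_n\|_{L^2}\lesssim 2^{2n}\|g_n\|_{L^2}\lesssim 2^{2n}\|\zeta_0\|_{L^2}$. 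The extra $2^{-2n}$ carried by the commutator is precisely what is needed to absorb the $2^{2n}$ loss coming from the two derivatives hidden in $\partial_rg_n/r$, yielding $\|\zeta_{0,n}\|_{L^2}\le C\|\zeta_0\|_{L^2}$ with $C$ absolute. Everything except this last estimate is soft; the crux is the cancellation producing the commutator structure and the exact matching of the $2^{\pm2n}$ powers.
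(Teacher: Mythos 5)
Your proof is correct, and it is a genuinely different route from the paper's, which disposes of this lemma in a few lines by citation: the preservation of axisymmetry is attributed to the radiality of $\chi$ with a reference to \cite{A-H-K}, and the key bound on $\|(\textnormal{curl}\,v_{0,n})/r\|_{L^2}$ is delegated to \cite{benamer}, where the analogous statement is proved for all $L^p$, $p\in[1,\infty]$, by kernel estimates. You instead give a complete, self-contained argument for the $L^2$ case, and it hinges on exactly the right mechanism: writing $\omega_0=\zeta_0(-x_2,x_1,0)$, commuting $S_n$ past the weights $x_j$ via the exact identity $[S_n,x_j]=-2^{-2n}\partial_j\tilde\phi(2^{-n}\textnormal{D})$ with $\tilde\phi$ radial (radiality of $\chi$ is used twice here: once so that $\tilde\phi$ is the same multiplier for $j=1,2$, and once so that $S_n$ preserves the axisymmetric-without-swirl class), and then trading the gained factor $2^{-2n}$ against the two derivatives hidden in $\partial_r g_n/r$ through the same pointwise identity the paper uses in the proof of Proposition \ref{prop1}, combined with Bernstein. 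Restricting to $p=2$ lets Plancherel replace the convolution-kernel analysis of \cite{benamer}, which is all that is needed for the lemma as stated; what the citation buys in exchange is the full $L^p$ range. Two cosmetic points: the cutoff $q\le n-2$ in your $B_{3,1}^0$ argument depends on the unspecified supports of $\chi$ and $\varphi$ (it is cleaner to note that $\Delta_qS_n=S_n\Delta_q$ with $S_n$ bounded on $L^3$ uniformly in $n$ by Young's inequality, and sum over $q$), and you should record that the pointwise identity for $\partial_rg_n/r$ is licit because $g_n$, having compactly supported Fourier transform, is smooth with all derivatives in $L^2$.
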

\begin{proof}
We have $v_{0,n}=2^{3n}\chi(2^n\cdot)\star v_0$. The  fact that the vector field $v_{0,n}$ is axisymmetric is due to the radial property  of the functions
$\chi$, for more details see \cite{A-H-K}. The estimate of $v_{0,n}$ in  $H^1$ is
easy to obtain by using the classical properties of the convolution  laws.  The proof of  the second estimate
 for the velocity  is more subtle and  we refer to \cite{benamer} where it
is proven  in the   general framework of  Lebesgue spaces, that is in  $L^p$, \mbox{for all $p\in[1,\infty]$.}
 The estimates for the density follow by standard convolution inequalities.
\
\end{proof}
 We have just seen in  Lemma \ref{lem1} that  the initial
structure of axisymmetry is preserved for every $n$ and the involved
norms are uniformly controlled with respect to this parameter $n$.
Thus we can construct locally in time a unique solution
$(v_n,\rho_n).$ This solution is globally defined  since the
Lipschitz norm of the velocity does not blow up in finite  time as it was stated
in \mbox{Proposition \ref{Lipschitz}.}
 Note that  since  $\rho_{0}\in B_{3, 1}^0$ (or $L^m, \, m>3$), the $L^3$ norm
  of $\rho_{0, n}$ is also uniformly bounded  thanks to the  continuous embedding
   $ B_{3, 1}^0 \subset L^3 $.
 By standard arguments we can
show that this family $(v_n,\rho_n)$ converges to $(v,\rho)$ which satisfies in
turn our initial problem. We omit here the details  and we will next focus on the
uniqueness part. Set
$$
\mathcal{X}_T:=\big(L^\infty_TH^1\cap L^2_TH^2\cap L^1_TB_{p,1}^{1+\frac3p}\big)\times L^\infty_T H^{-1}, \quad\hbox{for some} \quad p\in[3,\infty[.
$$
Let $(v^i,\rho^i)\in \mathcal{X}_T, 1\leq i\leq 2$ be  two solutions of the
 system \eqref{bsintro} with the same initial \mbox{data $(v_0,\theta_0)$} and denote
 $\delta v=v^2-v^1,\delta\theta=\theta^2-\theta^1$. Then
\begin{equation}\label{diff}
\left\{ \begin{array}{ll}
\partial_t\delta v-\Delta\delta
v
=-\mathcal{P}(v^2\cdot\nabla\delta v)-\mathcal{P}(\delta v\cdot\nabla v^1)+\mathcal{P}(\delta\rho\, e_z)
\\
\partial_t \delta\rho +v^2\cdot\nabla\delta\rho - \kappa \Delta\,  \delta \rho=
 -\delta v\cdot\nabla \rho^1
\\
{\mathop{\rm div}}\,  v^i=0.
\end{array}
\right.
\end{equation}
Using the maximal smoothing of the heat operator combined with H\"older inequality we get
\begin{eqnarray}\label{diff00}
\nonumber\|\delta v\|_{L^\infty_t\dot H^{1}}&\lesssim& \|v^2\cdot\nabla \delta v\|_{L^2_tL^2}+\|\delta v\cdot\nabla v^1\|_{L^2_tL^2}+\|\delta \rho\|_{L^1_t\dot H^{-1}}\\
\nonumber&\lesssim&\Big(\int_0^t\|v^2(\tau)\|_{L^\infty}^2\|\delta v(\tau)\|_{\dot H^1}^2d\tau\Big)^{\frac12}+\Big(\int_0^t\|\delta v(\tau)\|_{\dot H^1}^2\| v^1(\tau)\|_{\dot H^{\frac32}}^2d\tau\Big)^{\frac12}\\
&&\qquad\qquad\qquad\qquad\qquad\qquad\qquad+\|\delta \rho\|_{L^1_t\dot H^{-1}}.
\end{eqnarray}
We have used the classical law product
\begin{eqnarray*}
\|fg\|_{L^2}\lesssim \|f\|_{\dot H^{1}}\|g\|_{\dot H^{\frac12}}.
\end{eqnarray*}
To estimate $\|\delta\rho\|_{H^{-1}}$ we will use 
Proposition 3.1 of  \cite{AP} : for every $p\in[2,\infty[$
\begin{eqnarray*}
\|\delta\rho(t)\|_{\dot H^{-1}}
&\le&
C\|\delta v\cdot\nabla\rho^1\|_{L^1_t\dot H^{-1}}
\exp{(C\|\nabla v^2\|_{L^1_tB^{\frac{3}{p}}_{p,1}})}.
\end{eqnarray*}
We remark that the proof of this result was done in the inviscid case but it can be extended to the viscous case with uniform bounds with respect to the parameter $\kappa.$

As ${\mathop{\rm div}}\, \delta v=0,$ then using H\"older inequality and Sobolev embedding $\dot H^1\hookrightarrow L^6$ yield
\begin{eqnarray*}
\|\delta v\cdot\nabla\rho^1\|_{L^1_t\dot H^{-1}}&\le&\|\delta v\,\rho^1\|_{L^1_tL^2}\\
&\le&\|\delta v\|_{L^1_tL^6}\|\rho^1\|_{L^\infty_tL^3}\\
&\le& \|\rho_0\|_{L^3}\|\delta v\|_{L^1_t\dot H^1}.
\end{eqnarray*}
Thus we get
\begin{eqnarray*}
\|\delta\rho(t)\|_{\dot H^{-1}}
&\le&
C\|\rho_0\|_{L^3}\exp{(C\|\nabla v^2\|_{L^1_tB^{\frac{3}{p}}_{p,1}})}\|\delta v\|_{L^1_t\dot H^1}.
\end{eqnarray*}
 By plugging  this estimate into
(\ref{diff00}), we  finally get
\begin{eqnarray}\nonumber
\nonumber\|\delta v\|_{L^\infty_t\dot H^{1}}^2&\lesssim&\Big(\int_0^t\|v^2(\tau)\|_{L^\infty}^2\|\delta v(\tau)\|_{\dot H^1}^2d\tau+\int_0^t\|\delta v(\tau)\|_{\dot H^1}^2\| v^1(\tau)\|_{\dot H^{\frac32}}^2d\tau\\
\nonumber &+&\|\rho_0\|_{L^3}^2\exp{(C\|\nabla v^2\|_{L^1_tB^{\frac{3}{p}}_{p,1}})}\,t\,\int_0^t\|\delta v(\tau)\|_{\dot H^{1}}^2d\tau.
\end{eqnarray}
Since $v^2\in L^2_t L^\infty$ and $v^1\in L^2_t\dot H^{\frac32}$ then we get the uniqueness by using  Gronwall inequality.


\begin{thebibliography}{9999}

\bibitem{a} H. \textsc{Abidi}:
\emph{R\'esultats de r\'egularit\'e de solutions axisym\'etriques
pour le syst\`eme de Navier-Stokes.}  Bull. Sc. Math. {\bf 132} (2008), no 7, p. 592-624.


\bibitem{ah} H. \textsc{Abidi}, T. \textsc{Hmidi}:
\emph{On the global well-posedness for Boussinesq System.} J.
Diff. Equa.,   \textbf{233,   1} (2007),   p. 199-220.


\bibitem{A-H-K0}H. \textsc{Abidi}, T. \textsc{Hmidi}, K. \textsc{Sahbi}: On the global regularity of axisymmetric Navier-Stokes-Boussinesq system. 	arXiv:0908.0894v1 

\bibitem{A-H-K} H. \textsc{Abidi}, T. \textsc{Hmidi}, K. \textsc{Sahbi}:
\emph{On the global well-posedness for the axisymmetric Euler equations,} to appear in Mathematische Annalen


\bibitem{AP}
H. \textsc{Abidi}, M. \textsc{Paicu}:
\emph{Existence globale pour un fluide inhomog\`ene.}
Annales Inst. Fourier  {\bf57} (2007), p. 883-917.

\bibitem{bkm}\textsc{J. T. Beale, T. Kato and A. Majda}, \textit{ Remarks on the Breakdown of Smooth Solutions for the 3-D Euler Equations}. Commun. Math. Phys. \textbf{94}, p. 61-66, (1984).

\bibitem{benamer} J. \textsc{Ben Ameur},  R. \textsc{Danchin}:
\emph{Limite non visqueuse pour les fluides incompressibles axisym\'etrique.}
Nonlinear partial differential equations and their applications. Coll\`ege de France seminar,
 vol. XIV (Paris, 1997/1998), 29 - 55,
Stud. Math. Appl, 31 North. Holland, Amsterdam, 2002.

\bibitem{b} J.-M. \textsc{Bony}:
\emph{Calcul symbolique et propagation des singularit\'es pour les \'equations aux d\'eriv\'ees partielles non lin\'eaires.}
Ann. de l'Ecole Norm. Sup.,
\textbf{14} (1981), p. 209-246.

\bibitem{Brenier} Y. \textsc{Brenier}:
\emph{Optimal transport,   convection,   magnetic relaxation and generalized Boussinesq equations.}   preprint 2008,   arXiv:0801.1088.

\bibitem{CKN} L.  \textsc{Caffarelli}, R.  \textsc{Kohn} and L.  \textsc{Nirenberg}: {\it First order interpolation inequality with weights}, Compositio Math., 53 (1984), 259-275.

\bibitem{ha} D. \textsc{Chae}:
\emph{Global regularity for the $2$-D Boussinesq equations with partial viscous terms.}
Advances in Math.,   {\bf203},    2 (2006), p. 497-513.





\bibitem{che1}  J.-Y. \textsc{Chemin}:
\emph{Perfect incompressible Fluids.}
Oxford University Press 1998.


\bibitem{CG}  J.-Y. \textsc{Chemin}, I. \textsc{Gallagher}:
\emph{On the global wellposedness of the
3-D incompressible Navier-Stokes equations.}
Ann. de l'Ecole Norm. Sup., {\bf 39}  (2006), p. 679-698.

\bibitem{CG1} J.-Y. \textsc{Chemin}, I. \textsc{Gallagher}:
\emph{Wellposedness and stability results for the Navier-Stokes equations in $R^3$,} to appear in  Annales de l'Institut H. PoincarŽ, Analyse non LinŽaire.

\bibitem{CG2} J.-Y. \textsc{Chemin}, I. \textsc{Gallagher}, M. \textsc{Paicu}: {\it Global regularity for  some classes of large solutions to the Navier-Stokes equations,}    to appear in  Annals of Mathematics.   
\bibitem{rd} R. \textsc{Danchin}:
\emph{Axisymmetric incompressible flows with bounded vorticity.}
Russian Math.   Surveys {\bf 62} (2007), no 3, p. 73-94.

\bibitem{dp1} R. \textsc{Danchin},  M. \textsc{Paicu}:
 \emph{Global well-posedness issues for the inviscid Boussinesq system with Yudovich's type data.}
 arXiv:0806.4081[math.AP] 25 Jun (2008), to appear in Comm. Math. Physics.

\bibitem{dp} R. \textsc{Danchin},  M. \textsc{ Paicu}:
R. Danchin,   M. Paicu:
\emph{Le th\'eor\`eme de Leary et le th\'eor\`eme de Fujita-Kato pour le syst\`eme de Boussinesq partiellement visqueux.}
Bulletin de la S. M. F. {\bf136}  (2008), p. 261-309.


\bibitem{dp2} R. \textsc{Danchin},   M. \textsc{Paicu}:
\emph{Global existence results for the anistropic Boussinesq system in dimension two.}
arXiv:0809.4984v1 [math.AP] 29 Sep (2008).

\bibitem{hk1} T. \textsc{Hmidi}, S. \textsc{Keraani }:
\emph{On the global well-posedness of the two-dimensional Boussinesq system with a zero diffusivity.}
Adv. Differential Equations 12 (2007),   no. 4,  p. 461-480

\bibitem{hk2} T. \textsc{Hmidi}, S. \textsc{Keraani }: \emph{ Incompressible viscous flows in borderline Besov spaces.} Arch. Ration. Mech. Anal. 189 (2008), no. 2, 283-300. 

\bibitem{hk} T. \textsc{Hmidi},   S. \textsc{Keraani}:
\emph{On the global well-posedness of the Boussinesq system with zero viscosity,} to appear in Indiana Univ. Math. Journal.

\bibitem{HKR1} T. \textsc{Hmidi},   S. \textsc{Keraani}, F. \textsc{Rousset}:
\textit{Global well-posedness for Euler-Boussinesq  system.}
Preprint 2009,   arXiv:0903.3747.

 \bibitem{HKR2} T. \textsc{Hmidi}, S. \textsc{Keraani}, F. \textsc{Rousset}:
 \emph{Global well-posedness for Navier-Stokes-Boussinesq  system.}   Preprint 2009, arXiv:0904.1536v1.




\bibitem{LA} O. A. \textsc{Ladyzhenskaya}:
\emph{Unique solvability in large of a three-dimensional Cauchy problem for the Navier-Stokes equations in the presence of axial symmetry.}
 Zapisky Nauchnych Sem. LOMI {\bf7} (1968), p. 155-177.


\bibitem{Lemar}  P.-G. \textsc{Lemari\'e}:
\emph{Recent developments in the Navier-Stokes problem.}
CRC Press, 2002.



\bibitem{lmnp} S. \textsc{Leonardi}, J. \textsc{M\'alek}, J.\textsc{Nec$\check{a}$s}, M. \textsc{Pokorn\'y}:
\emph{On axially symmetric flows in $\RR^3$.}
Zeitschrift f\"ur analysis und ihre anwendungen. Journal for analysis and its applications volume
 {\bf 18} (1999), no. 3, p. 639-649.

  \bibitem{Leray} J. \textsc{Leray}:
  \emph{Sur le mouvement d'un liquide visqueux remplissant l'espace.}
  Acta mathematica,  {\bf 63} (1934), p. 193-248.





\bibitem{Taira}
T.   \textsc{Shirota}, T. \textsc{Yanagisawa}:
\emph{Note on global existence for axially symmetric
solutions of the Euler system.}
Proc.   Japan Acad.   Ser.   A Math.   Sci.   {\bf 70} (1994), no.   10, p. 299-304.



\bibitem{Ukhovskii}
M.   R.  \textsc{Ukhovskii}, V.   I. \textsc{Yudovich}:
\emph{Axially symmetric flows of ideal and
viscous fluids filling the whole space.}
Prikl.   Mat.   Meh.   {\bf 32} (1968), no.   1, p. 59-69.






\end{thebibliography}
\end{document}